\documentclass[12pt,reqno]{amsart}

\usepackage{amssymb}
\usepackage{amscd}
\usepackage{amsfonts,mathrsfs}
\usepackage{setspace}
\usepackage{version}

\usepackage{graphicx}


\newtheorem{theorem}{Theorem}[section]
\newtheorem{lemma}[theorem]{Lemma}
\newtheorem{proposition}[theorem]{Proposition}

\newtheorem*{multinil}{Theorem \cite[Theorem 8.6]{green-tao-nilratner}}
\newtheorem*{lem74rpt}{Lemma \cite[Lemma 7.4]{green-tao-nilratner}}
\newtheorem*{lem75rpt}{Lemma \cite[Lemma 7.5]{green-tao-nilratner}}

\theoremstyle{definition}

\theoremstyle{remark}

\renewcommand{\leq}{\leqslant}
\renewcommand{\geq}{\geqslant}
\newcommand\Lip{\operatorname{Lip}}
\newcommand\ab{\operatorname{ab}}

\newcommand\id{\operatorname{id}}
\newcommand\poly{\operatorname{poly}}

\newcommand{\md}[1]{\ensuremath{(\operatorname{mod}\, #1)}}

\newcommand\lin{{\operatorname{lin}}}

\newcommand\nonlin{{\operatorname{nlin}}}
\newcommand\X{\mathcal{X}}

\def\R{\mathbb{R}}
\def\C{\mathbb{C}}
\def\Z{\mathbb{Z}}
\def\E{\mathbb{E}}

\def\Q{\mathbb{Q}}

\def\eps{\varepsilon}

\numberwithin{equation}{section}

\begin{document}

\title[Erratum]{On the quantitative distribution of polynomial nilsequences -- erratum}


\author{Ben Green}
\address{Mathematical Institute\\
Radcliffe Observatory Quarter\\
Woodstock Road\\
Oxford OX2 6GG\\
England }
\email{ben.green@maths.ox.ac.uk}

\author{Terence Tao}
\address{Department of Mathematics, UCLA\\
405 Hilgard Ave\\
Los Angeles CA 90095\\
USA}
\email{tao@math.ucla.edu}



\begin{abstract}
This is an erratum to the paper \emph{The quantitative behaviour of polynomial orbits on nilmanifolds} by the authors, published as Ann. of Math. (2) \textbf{175} (2012), no. 2, 465--540. The proof of Theorem 8.6 of that paper, which claims  a distribution result for multiparameter polynomial sequences on nilmanifolds, was incorrect, and furthermore fails when at least one (but not all) of the $N_i$ are small. We provide two fixes for this issue here. First, we deduce the ``equal sides'' case $N_1 = \dots = N_t = N$ of this result from the 1-parameter results in the paper. This is the same basic mode of argument we attempted originally, though the details are different. The equal sides case is the only one required in applications such as the proof of the inverse conjectures for the Gowers norms due to the authors and Ziegler. Second, we sketch a proof that the multiparameter result in its original generality, that is to say without the equal sides restriction, does in fact hold, if one excludes the case when one of the $N_i$ are small. To obtain this statement the entire argument of our paper must be run in the context of multiparameter polynomial sequences $g : \Z^t \rightarrow G$ rather than 1-parameter sequences $g : \Z \rightarrow G$ as is currently done.
\end{abstract}

\maketitle
\tableofcontents
\section{Introduction}

We quote from \cite{green-tao-nilratner} and use its notation without any further comment. The problematic part of that paper is Section 8, in which a ``multiparameter quantitative Leibman theorem'', \cite[Theorem 8.6]{green-tao-nilratner} is established: results such as \cite[Theorem 1.19]{green-tao-nilratner} and \cite[Theorem 2.9]{green-tao-nilratner}, which involve only one variable polynomial maps, are not affected.

In \cite[Section 8]{green-tao-nilratner} we attempted to deduce a multiparameter result from the 1-parameter version, \cite[Theorem 2.9]{green-tao-nilratner}. Unfortunately the deduction is erroneous: the problem comes with the line ``By switching the indices $i_1,\dots, i_t$ if necessary\dots'' towards the end of the proof. The problem is that the horizontal character $\eta$ defined towards the start of the proof may change when this is done, and this invalidates the argument.  Furthermore, the theorem is in fact false in the case that one of the $N_i$ (but not all of them) is very small.  For instance, consider the polynomial map $g: \Z^2 \to \R/\Z$ given by $g(n_1,n_2) = \alpha (n_1-1) n_2$ for some irrational $\alpha$, and take $N_1=2$ and $N_2=N$ for a large $N$.  Then $g$ is highly non-equidistributed on $[N_1] \times [N_2]$, since it vanishes on the set $\{1\} \times [N_2]$, which is half of $[N_1] \times [N_2]$. However, it has an extremely large $C^\infty([N_1] \times [N_2])$ norm.

We thank Bryna Kra and Wenbo Sun for drawing this oversight to our attention, and for further drawing our attention to an error in the first version of this erratum, and to Marius Mirek for conversations that led to the counterexample when one of the $N_i$ is small.

Our aim is to correct these oversights. First, we deduce a multiparameter quantitative Leibman theorem from the 1-parameter version. However we are only able to do this in the ``equal parameters'' case of \cite[Theorem 8.6]{green-tao-nilratner} in which $N_1 = \dots = N_t = N$. To lift this restriction seems to require running the entire argument of \cite{green-tao-nilratner} in the context of multiparameter maps from $\Z^t$ to $G$. In \S \ref{rerun-sec} we provide a guide to doing this, of necessity extremely dependent on \cite{green-tao-nilratner}. The changes required in the multivariate case propagate right back to the most basic result in \cite{green-tao-nilratner}, Proposition 3.1, which must be proven in a multivariate setting.  To avoid the above counterexample, one has to add the following alternate conclusion to Theorem 8.6, namely that one has to also allow for the possibility that $N_i \ll \delta^{-O_{d,m,t}(1)}$ for some $i=1,\dots,t$.

The problematic result  \cite[Theorem 8.6]{green-tao-nilratner} was required in Sections 9 and 10 of \cite{green-tao-nilratner}, and as a consequence those results are restricted to the equal parameter case if one only uses the first fix contained in this erratum. By following \S \ref{rerun-sec}, one could remove this restriction, but now one has to add the hypothesis that $N_i \geq C \delta^{-C}$ for all $i=1,\dots,t$ and a sufficiently large $C$  depending on $d,m,t$ (or on $A,m,d$, in the case of Theorem 10.2).

Finally, in \S \ref{minor}, we list some additional minor errata to \cite{green-tao-nilratner}, which we take the opportunity to record here.

Let us briefly summarise the subsequent publications depending on \cite[Theorem 8.6]{green-tao-nilratner} that we are aware of.

\begin{itemize}
\item In \cite{gtz}, the proof of the $\mbox{GI}(s)$ conjectures, the appeal to \cite{green-tao-nilratner} occurs in Appendix D, specifically Theorem D.2. In this application we have $N_1 = \dots = N_t = N$.

\item In \cite{arithmetic-regularity}, the appeal to \cite{green-tao-nilratner} occurs in the proof of the counting lemma. A slightly modified version of the problematic \cite[Theorem 8.6]{green-tao-nilratner} is required, which is stated as \cite[Theorem 3.6]{arithmetic-regularity}. The proof of this is given in \cite[Appendix B]{arithmetic-regularity}, where it may be confirmed that again we only require the case $N_1 = \dots = N_t = N$. 

\item \cite{gorodnik-spatzier,gorodnik-spatzier2} Whilst these papers do state results depending on \cite[Theorem 8.6]{green-tao-nilratner} in which the equal sides condition is not assumed, the authors have confirmed to us that the main results of these papers, and in particular the results used subsequently in \cite{fks}, only require the equal sides case.
\end{itemize}

Let us recall the precise statement of \cite[Theorem 8.6]{green-tao-nilratner}, corrected as per the above discussion.  

\begin{multinil}
Let $0 < \delta < 1/2$, and let $m,t \geq 1$, $N_1,\dots, N_t \geq 1$ and $d \geq 1$ be integers. Write $\vec N = (N_1,\ldots,N_t)$ and $[\vec N] = [N_1] \times \dots \times [N_t]$.  Suppose that $G/\Gamma$ is an $m$-dimensional nilmanifold equipped with a $\frac{1}{\delta}$-rational Mal'cev basis $\mathcal{X}$ adapted to some filtration $G_{\bullet}$ of degree $d$, and that $g \in \poly(\Z^t, G_{\bullet})$. Then either $(g(\vec{n})\Gamma)_{\vec{n} \in [\vec{N}]}$ is $\delta$-equidistributed, or $N_i \ll \delta^{-O_{d,m,t}(1)}$ for some $i=1,\dots,t$, or else there is some horizontal character $\eta$ with $0 < \Vert \eta \Vert \ll \delta^{-O_{d,m,t}(1)}$ such that 
\[ \Vert \eta \circ g \Vert_{C^{\infty}[\vec{N}]} \ll \delta^{-O_{d,m,t}(1)}.\]
In the ``equal sides'' case $N_1=\dots=N_t$, the alternative $N_i \ll \delta^{-O_{d,m,t}(1)}$ may be deleted.
\end{multinil}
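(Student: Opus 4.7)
The plan is to deduce the equal-sides case $N_1 = \cdots = N_t = N$ from the 1-parameter Theorem 2.9 of \cite{green-tao-nilratner} by induction on $t$, base case $t=1$ being Theorem 2.9 itself. Suppose $g \in \poly(\Z^t, G_{\bullet})$ is not $\delta$-equidistributed on $[N]^t$: then there is a Lipschitz $F \colon G/\Gamma \to \C$ of mean zero with $\Vert F\Vert_{\Lip} \leq 1$ and $\lvert \E_{\vec n \in [N]^t} F(g(\vec n)\Gamma)\rvert \geq \delta$. A Fubini-and-pigeonhole argument in the first coordinate produces $S \subseteq [N]$ of density at least $\delta/2$ such that for every $n_1 \in S$, the $(t-1)$-parameter orbit $\vec m \mapsto g(n_1, \vec m)\Gamma$ fails to be $(\delta/2)$-equidistributed on $[N]^{t-1}$. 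Applying the inductive hypothesis to each such slice and pigeonholing over the $O(K^m)$ horizontal characters of norm at most $K \ll \delta^{-O_{d,m,t-1}(1)}$ yields a nontrivial common $\eta$ with $\Vert \eta \circ g(n_1, \cdot)\Vert_{C^\infty[N^{t-1}]} \leq K$ for all $n_1$ in a subset $S' \subseteq S$ of density $\delta' \gg \delta^{O_{d,m,t}(1)}$.

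Expanding $\eta \circ g(\vec n) = \sum_{\vec j} \alpha_{\vec j} \binom{n_1}{j_1} \cdots \binom{n_t}{j_t}$ and writing $\vec j' = (j_2, \ldots, j_t)$, the bound on each slice translates to $\Vert q_{\vec j'}(n_1)\Vert_{\R/\Z} \leq K/N^{\lvert\vec j'\rvert}$ for every $\vec j' \neq \vec 0$ and every $n_1 \in S'$, where $q_{\vec j'}(n_1) := \sum_{j_1 \geq 0} \alpha_{(j_1, \vec j')} \binom{n_1}{j_1}$. The 1-variable Lemma 3.2 of \cite{green-tao-nilratner} applied to each $q_{\vec j'}$---after replacing $\eta$ by a bounded integer multiple if needed to cancel the denominator arising in that lemma's conclusion---then yields $N^{\lvert\vec j\rvert} \Vert \alpha_{\vec j}\Vert_{\R/\Z} \ll \delta^{-O_{d,m,t}(1)}$ for every $\vec j$ with $\vec j' \neq \vec 0$. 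To control the remaining coefficients $\alpha_{(j_1, \vec 0)}$ with $j_1 \geq 1$, we perform a symmetric slicing argument in the $n_2$-coordinate: fixing $n_2$ and applying the inductive hypothesis to the resulting $(t-1)$-parameter orbits $(n_1, n_3, \ldots, n_t) \mapsto g(n_1, n_2, n_3, \ldots, n_t)\Gamma$ yields a horizontal character controlling $\alpha_{\vec j}$ for every $\vec j$ with $(j_1, j_3, \ldots, j_t) \neq \vec 0$, and the union of the two controls covers every $\vec j \neq \vec 0$. Provided both slicings can be made to produce the same $\eta$---arranged via a joint pigeonhole over pairs of characters at the outset, costing only a further polynomial factor in $\delta^{-1}$---we conclude $\Vert \eta \circ g\Vert_{C^\infty[N^t]} \ll \delta^{-O_{d,m,t}(1)}$, as required.

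The main obstacle is the synchronization of characters across the two slicings: ensuring that a single horizontal character $\eta$ witnesses the non-equidistribution in both slicings simultaneously. This is precisely the subtle ``switching indices'' issue flagged in the introduction, where the original argument silently assumed such a common character could be chosen. We expect to resolve it via a preliminary Fourier-type decomposition of the test function $F$ on $G/\Gamma$ to extract a single ambient character $\eta^{\star}$ along which the bulk of the non-equidistribution of $g$ propagates, so that both slicings naturally witness $\eta^{\star}$ up to a bounded integer multiple. A secondary technical ingredient is a multivariate form of Lemma 3.2 used implicitly in the Taylor bookkeeping, which follows from the 1-variable version by iteration over coordinate axes together with standard density-Fubini bookkeeping.
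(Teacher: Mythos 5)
Your argument is essentially the original, erroneous deduction of \cite[Section 8]{green-tao-nilratner} that this erratum was written to replace, and the gap it contains is exactly the one flagged in the introduction. You identify the obstacle accurately: when slicing in two different coordinate directions, the horizontal characters produced by the inductive hypothesis need not agree. But neither of your proposed repairs resolves it. A ``joint pigeonhole over pairs of characters'' merely finds a common \emph{pair} $(\eta,\eta')$ across the two slicings; since $\eta,\eta'$ live in $\Z^{m_{\ab}}$ rather than $\Z$, they cannot be merged by multiplication the way one can in the scalar case $G/\Gamma=\R/\Z$. (That scalar case is precisely Proposition \ref{prop2.2}, where the original slicing argument \emph{does} go through because the characters $\eta_{n_2,\dots,n_t}$ and the auxiliary integers $q_{\vec i}$ are all scalars combinable by a product into a single $q$.) As for the ``preliminary Fourier decomposition of $F$'': on a nonabelian $G/\Gamma$ the test function $F$ is not itself a horizontal character, and the horizontal character in the conclusion of \cite[Theorem 2.9]{green-tao-nilratner} is manufactured internally by the van~der~Corput / degree-reduction machinery and can legitimately depend on the slice, so there is no single ambient $\eta^\star$ to extract from $F$ beforehand. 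Even on $(\R/\Z)^2$ the example $g(n_1,n_2)=(n_1/2,n_2/2)$ shows that the two coordinate slicings naturally report different, orthogonal characters. (A secondary point: your Taylor bookkeeping needs the polynomial version of \cite[Lemma 3.2]{green-tao-nilratner} applied in the $n_1$-variable, namely \cite[Lemma 4.5]{green-tao-nilratner}, since $q_{\vec j'}(n_1)$ is a polynomial of degree up to $d-|\vec j'|$, not linear; but this is minor.)

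The proof given in this erratum for the equal-sides case sidesteps synchronization by slicing in a genuinely different way. Instead of fixing $n_1$ and then $n_2$, it covers $[N]^t$ by one-parameter progressions $\vec x + \vec q n$, $n\in[N/L^2]$, pointing in \emph{many} directions $\vec q\in[L]^t$ with $L = \delta^{-C}$ (Lemma \ref{lemma1}), applies the 1-parameter \cite[Theorem 2.9]{green-tao-nilratner} to each of these, and pigeonholes \emph{once} to extract a single frequency $\xi$ (and auxiliary factors $Q_1,\dots,Q_d$) valid on a dense set of pairs $(\vec q,\vec x)$. The direction $\vec q$ then enters the resulting constraint \eqref{eq2} only through the monomial vector $(\vec q^{\vec i})_{\vec i\in\mathscr{I}}$, and the Schwartz--Zippel estimate (Lemma \ref{poly}) together with Siegel's lemma (Lemma \ref{lemma3} and the subsequent discussion) allows one to take bounded-height integer linear combinations over $\vec q\in\mathscr{Q}'$ to isolate every Taylor coefficient $c_{\vec i}(\vec 0)$ simultaneously, all for a \emph{single} character $\tilde{\tilde\xi}$. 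Because there is only one pigeonhole over characters, the synchronization problem never appears.
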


\textsc{Notation.} We will not explicitly indicate the dependence of constants $C$ or implied constants $O()$ on the parameters $m,t$ and $d$, which will remain fixed throughout this erratum. We will write $\mathscr{I}$ for the set of multi-indices $\vec{i} = (i_1,\dots, i_t)$ of total degree at most $d$, that is to say tuples of non-negative integers with $i_1 + \dots + i_t \leq d$. 

\section{Some results on polynomials}

In this section we record some useful distribution results on polynomials which we will need in both of the proofs of \cite[Theorem 8.6]{green-tao-nilratner}.  

We start with some remarks about Taylor coefficients and smoothness norms. If $f : \Z^t \rightarrow \R$ is a polynomial map then in \cite[Definition 8.2]{green-tao-nilratner} we defined the Taylor coefficients of $f$ by writing
\begin{equation}\label{taylor1} f(\vec{n}) = \sum_{\vec{i}} \alpha_{\vec{i}} \binom{\vec{n}}{\vec{i}}.\end{equation} We then defined the smoothness norm
\[ \Vert f \Vert_{C^{\infty}[\vec{N}]} := \sup_{\vec{i} \neq 0} \vec N^{\vec i} \Vert \alpha_{\vec{i}} \Vert_{\R/\Z}.\]
Here, however it is more convenient to use the conventional Taylor expansion
\begin{equation}\label{taylor2} f(\vec{n}) = \sum_{\vec{i}} \beta_{\vec{i}} \vec{n}^{\vec{i}},\end{equation} and to consider the variant smoothness norm
\[ \Vert f \Vert_{C_*^{\infty}[\vec{N}]} := \sup_{\vec{i} \neq 0} \vec N^{\vec i} \Vert \beta_{\vec{i}} \Vert_{\R/\Z}.\]
\begin{lemma}\label{taylor}
Suppose that $\Vert f \Vert_{C_*^{\infty}[\vec{N}]} \leq M$. Then there is some $r = O(1)$ such that $\Vert r f \Vert_{C^{\infty}[\vec{N}]} \ll M$.
\end{lemma}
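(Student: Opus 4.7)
The plan is to show directly that $\Vert f \Vert_{C^{\infty}[\vec{N}]} \ll M$, which gives the lemma with $r=1 = O(1)$. The key fact is that the change-of-basis from the monomial basis $\{\vec{n}^{\vec{k}}\}$ to the binomial basis $\{\binom{\vec{n}}{\vec{j}}\}$ has integer entries of size $O(1)$, since $|\vec{k}|\leq d$ throughout.

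Concretely, the classical one-variable identity $n^k = \sum_{j=0}^k S(k,j)\, j!\, \binom{n}{j}$ (with $S(k,j)$ the Stirling numbers of the second kind) expands by coordinate-wise multiplication to
\[ \vec{n}^{\vec{k}} = \sum_{\vec{j}\leq \vec{k}} c_{\vec{k},\vec{j}}\, \binom{\vec{n}}{\vec{j}}, \qquad c_{\vec{k},\vec{j}} := \prod_{l=1}^t S(k_l,j_l)\, j_l! \in \Z_{\geq 0}. \]
Substituting this into \eqref{taylor2} and matching with \eqref{taylor1} gives
\[ \alpha_{\vec{i}} = \sum_{\vec{k}\geq \vec{i},\, |\vec{k}|\leq d} c_{\vec{k},\vec{i}}\, \beta_{\vec{k}}, \]
with only finitely many terms, each coefficient $c_{\vec{k},\vec{i}} = O(1)$.

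Then I would apply the elementary facts $\Vert c\beta\Vert_{\R/\Z} \leq |c|\, \Vert \beta\Vert_{\R/\Z}$ for $c\in\Z$ and the triangle inequality for $\Vert\cdot\Vert_{\R/\Z}$, together with the hypothesis $\Vert \beta_{\vec{k}}\Vert_{\R/\Z} \leq M/\vec{N}^{\vec{k}}$ and the trivial inequality $\vec{N}^{\vec{k}}\geq \vec{N}^{\vec{i}}$ whenever $\vec{k}\geq \vec{i}$ (valid since each $N_l\geq 1$). This gives, for every $\vec{i}\neq 0$,
\[ \Vert \alpha_{\vec{i}}\Vert_{\R/\Z} \leq \sum_{\vec{k}\geq \vec{i}} |c_{\vec{k},\vec{i}}|\cdot \frac{M}{\vec{N}^{\vec{k}}} \ll \frac{M}{\vec{N}^{\vec{i}}}, \]
and multiplying through by $\vec{N}^{\vec{i}}$ and taking suprema yields $\Vert f\Vert_{C^{\infty}[\vec{N}]} \ll M$.

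There is essentially no serious obstacle here; the entire argument is a finite change-of-basis computation together with two elementary properties of the $\R/\Z$ norm. The only thing worth noting is why one might expect an $r = O(1)$ in the statement at all: the reverse transformation $\binom{n}{i} = \frac{1}{i!}\sum_k s(i,k) n^k$ (Stirling numbers of the first kind) has denominators, so passing from $C^\infty$ control to $C^\infty_*$ control genuinely requires multiplying by a common denominator $r$ dividing $\prod_l i_l!$. In the direction we need here, however, the integrality of Stirling numbers of the second kind makes $r=1$ suffice.
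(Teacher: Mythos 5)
Your argument is correct and takes essentially the same change-of-basis approach as the paper's one-line proof, but you execute it more carefully in two respects that genuinely matter. The paper asserts only that the transition coefficients $M_{\vec{i},\vec{j}}$ (with $\alpha_{\vec{i}} = \sum_{\vec{j}} M_{\vec{i},\vec{j}}\beta_{\vec{j}}$) are rational of height $O(1)$ and vanish when $|\vec{j}|<|\vec{i}|$; you identify them explicitly as $\prod_l S(j_l,i_l)\,i_l!$, which are \emph{integers}, and note that they vanish unless $\vec{i}\leq\vec{j}$ \emph{coordinatewise}. Integrality is what permits $r=1$ (the paper's weaker ``rational'' hypothesis would force a denominator-clearing $r$), and the coordinatewise vanishing is exactly what guarantees $\vec{N}^{\vec{j}}\geq\vec{N}^{\vec{i}}$ when the $N_l$ are unequal; the total-degree condition $|\vec{j}|\geq|\vec{i}|$ alone would not suffice (e.g.\ $\vec{j}=(3,0)$, $\vec{i}=(0,2)$, $\vec{N}=(2,100)$ gives $\vec{N}^{\vec{j}}=8<10^4=\vec{N}^{\vec{i}}$). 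So your write-up is a correct and slightly sharpened version of the intended argument, and in particular closes a small gap in the paper's terse justification when the sides $N_l$ are not all equal.
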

\begin{proof}
This follows from the fact that $\alpha_{\vec{i}} = \sum_{\vec{j} \in \mathscr{I}} M_{\vec{i}, \vec{j}} \beta_{\vec{j}}$ with each $M_{\vec{i}, \vec{j}}$ rational with height $O(1)$ and $M_{\vec{i}, \vec{j}} = 0$ when $|\vec{j}| < |\vec{i}|$.
\end{proof}

We turn now to the following statement, which is actually the special case $G/\Gamma = \R/\Z$ of the problematic result \cite[Theorem 8.6]{green-tao-nilratner}.

\begin{proposition}\label{prop2.2}
Suppose that $g : \Z^t \rightarrow \R$ is a polynomial of total degree $d$, and let $0 < \delta < \frac{1}{2}$. Then either $(g(n) \md \Z)$ is $\delta$-equidistributed, or else there is some $q \in \Z$, $0 < |q| \ll \delta^{-O(1)}$, such that $\Vert q g \Vert_{C^{\infty}[\vec{N}]} \ll \delta^{-O(1)}$.
\end{proposition}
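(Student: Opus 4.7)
The strategy is to prove Proposition~\ref{prop2.2} by induction on the number of parameters $t$, with the base case $t = 1$ being the quantitative 1-parameter Weyl theorem already established in \cite{green-tao-nilratner}.

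As a preliminary, a Fourier-analytic reduction converts non-equidistribution into a single-character exponential-sum estimate. Approximating any Lipschitz witness $F$ to non-equidistribution by its Fejér-smoothed Fourier expansion of degree $\delta^{-O(1)}$ produces an integer $k$ with $0 < |k| \ll \delta^{-O(1)}$ such that $|\E_{\vec n \in [\vec N]} e(kg(\vec n))| \gg \delta^{O(1)}$. Replacing $g$ by $kg$ (with $k$ to be reabsorbed into the final modulus $q$), we may assume
\[ |\E_{\vec n \in [\vec N]} e(g(\vec n))| \gg \delta^{O(1)}. \]

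For the inductive step, set $\vec n' := (n_1, \ldots, n_{t-1})$ and $\vec N' := (N_1, \ldots, N_{t-1})$. Pigeonhole on $\vec n'$ yields a subset $S \subseteq [\vec N']$ of density $\gg \delta^{O(1)}$ on which $|\E_{n_t \in [N_t]} e(g(\vec n', n_t))| \gg \delta^{O(1)}$. Applying the $t = 1$ case of the proposition to each 1-parameter polynomial $g(\vec n', \cdot)$ of degree $\leq d$, and pigeonholing on the resulting moduli, one obtains an integer $q_0 \ll \delta^{-O(1)}$ and a further subset $S_0 \subseteq S$ of density $\gg \delta^{O(1)}$ on which $\|q_0 g(\vec n', \cdot)\|_{C^{\infty}[N_t]} \ll \delta^{-O(1)}$. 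Expanding $q_0 g(\vec n', n_t) = \sum_{i_t} A_{i_t}(\vec n')\binom{n_t}{i_t}$, each Taylor coefficient $A_{i_t}\colon\Z^{t-1}\to\R$ (itself a polynomial of total degree $\leq d - i_t$) satisfies
\[ \|A_{i_t}(\vec n')\|_{\R/\Z} \ll \delta^{-O(1)}/N_t^{i_t} \quad \text{for all } \vec n' \in S_0 \text{ and } i_t \geq 1. \]
The plan is to invoke the inductive hypothesis on each $A_{i_t}$ to produce a modulus $r_{i_t} \ll \delta^{-O(1)}$ controlling $A_{i_t}$ appropriately in $C^{\infty}[\vec N']$, and then set $q := k q_0 \prod_{i_t \geq 1} r_{i_t}$.

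The principal obstacle is the transition from the pointwise ``close to $\Z$'' bound on the positive-density set $S_0$ to the large-exponential-sum hypothesis needed to invoke the inductive step. The intended route is a van der Corput / Cauchy--Schwarz autocorrelation argument in the regime where $N_t^{i_t}$ is large compared to $\delta^{-O(1)}$: using that $e(A_{i_t}(\vec n' + \vec h) - A_{i_t}(\vec n'))$ has real part close to $1$ whenever both $\vec n'$ and $\vec n' + \vec h$ lie in $S_0$, one extracts $|\E_{\vec n' \in [\vec N']} e(L A_{i_t}(\vec n'))| \gg \delta^{O(1)}$ for some integer $L \ll \delta^{-O(1)}$, after which the inductive hypothesis delivers the desired $r_{i_t}$. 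The complementary regime, in which $N_t$ is small compared to $\delta^{-O(1)}$, is exceptional: one either invokes an additional ``some $N_i$ small'' clause (as in the general-sides version of Theorem~8.6) or assumes the equal-sides hypothesis $N_1 = \dots = N_t$ under which this regime cannot occur for large common $N$. The accumulation of polynomial-in-$\delta^{-1}$ moduli at each step is routine.
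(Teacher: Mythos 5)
Your proposal takes a genuinely different route from the paper's, and it has two gaps that prevent the induction from closing.

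The paper's proof does \emph{not} induct on $t$.  It runs the same one–direction argument once for each coordinate $j=1,\dots,t$: averaging over the remaining $t-1$ variables, applying the one-parameter Leibman theorem in the $n_j$ direction to produce an integer $\eta^{(j)}$, and then iterating a multiparameter variant of \cite[Lemma 4.5]{green-tao-nilratner} in the remaining variables to control all Taylor coefficients $\alpha_{\vec i}$ with $i_j>0$.  The point, flagged in the parenthetical remark at the end of the paper's proof, is that for $G/\Gamma=\R/\Z$ the horizontal characters $\eta^{(j)}$ are integers and can be multiplied together across the $t$ directions, which is exactly what fails for general nilmanifolds.  Since every $\vec i\neq 0$ has some $i_j>0$, every non-zero Taylor coefficient is captured by some direction.

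Your scheme only runs the argument in the $n_t$ direction and then tries to recurse.  Two things go wrong.  First, the coefficients $\alpha_{\vec i}$ with $\vec i=(\vec i',0)$, $\vec i'\neq 0$, are never touched: they sit inside $A_0(\vec n')=q_0g(\vec n',0)$, which you have not shown fails to equidistribute, and your bound $\|A_{i_t}\|_{\R/\Z}\ll\delta^{-O(1)}/N_t^{i_t}$ is restricted to $i_t\geq 1$, so $A_0$ receives no control at all.  Second, and more seriously, even for $i_t\geq 1$ the bookkeeping does not close.  Your Fej\'er/van der Corput step produces at best $|\E_{\vec n'}e(LA_{i_t}(\vec n'))|\gg\delta^{O(1)}$ for some $L\ll\delta^{-O(1)}$, and feeding that into the inductive hypothesis gives $\|r_{i_t}LA_{i_t}\|_{C^\infty[\vec N']}\ll\delta^{-O(1)}$, i.e.\ $\|r_{i_t}La_{\vec i',i_t}\|_{\R/\Z}\ll\delta^{-O(1)}/\vec N'^{\vec i'}$.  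What is required is the stronger bound $\ll\delta^{-O(1)}/(\vec N'^{\vec i'}N_t^{i_t})$; the factor $N_t^{-i_t}$, which encodes the smallness $\eps=\delta^{-O(1)}/N_t^{i_t}$ in your pointwise hypothesis, is lost when you pass through a plain exponential-sum estimate.  Recovering it requires the scaling/density argument underlying \cite[Lemma 4.5]{green-tao-nilratner} (cf.\ Proposition~\ref{prop2.1}), in which one applies the Leibman-type input to the dilates $\lambda A_{i_t}$ for $\lambda\leq O(\delta/\eps)$ and then extracts the $\eps$ gain via \cite[Lemma 3.2]{green-tao-nilratner}.  That ingredient is the real content of the ``as far as (8.2)'' step in the paper's proof, and it is absent from your sketch.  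Your observation about the regime where $N_t$ is small compared to $\delta^{-O(1)}$ is, incidentally, a correct diagnosis of a defect shared by the proposition as stated; the applications in the erratum only invoke it in regimes where this cannot occur.
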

\begin{proof}
Slightly amusingly, the attempted argument of \cite[Theorem 8.6]{green-tao-nilratner}  is actually valid in this case. We run through the details briefly, referring the reader to the aforementioned argument if further clarification is required. 
A simple averaging argument confirms that, for $\gg \delta^{O(1)} N_2 \dots N_t$ values of $(n_2,\dots, n_t) \in [N_2 \times \dots \times N_t]$, the polynomial sequence $(g_{n_2,\dots, n_t}(n) \md{\Z})_{n \in [N_1]}$ is not $\delta^{O(1)}$-equidistributed, where $g_{n_2,\dots, n_t}(n) := g(n, n_2,\dots, n_t)$. For each such tuple, \cite[Theorem 2.9]{green-tao-nilratner} implies that there is an integer $\eta_{n_2,\dots, n_t}$ with $0 < | \eta_{n_2,\dots, n_t} | \ll \delta^{-O(1)}$ such that $\Vert \eta_{n_2,\dots, n_t}  g_{n_2,\dots, n_t} \Vert_{C^{\infty}[N_1]} \ll \delta^{-O(1)}$. By pigeonholing in the $\delta^{-O(1)}$ possible values of $\eta_{n_2,\dots, n_t}$ and passing to a thinner set of tuples $(n_2,\dots, n_t)$ we may assume that $\eta_{n_2,\dots, n_t} = \eta$ does not depend on $(n_2,\dots, n_t)$. Writing $p := \eta g$, and continuing to argue as in the proof of \cite[Theorem 8.6]{green-tao-nilratner} as far as (8.2), we deduce that for all $\vec{i}$ with $i_1 > 0$ there is some $q_{\vec{i}} \ll \delta^{-O(1)}$ such that $\Vert q_{\vec{i}} p_{\vec{i}} \Vert_{\R/\Z} \ll \delta^{-O(1)} / \vec N^{\vec i}$, where $p_{\vec{i}}$ is the $\vec{i}$th Taylor coefficient of $p$. Hence, defining $\tilde q_{\vec{i}} := \eta q_{\vec{i}}$, we have $\Vert \tilde q_{\vec{i}} g_{\vec{i}} \Vert_{\R/\Z} \ll \delta^{-O(1)} / \vec N^{\vec i}$. A similar argument holds whenever there is some index $j$ with $\vec{i}_j > 0$, that is to say whenever $\vec{i} \neq 0$. Taking $q := \prod_{\vec{i} \in \mathscr{I}} \tilde q_{\vec{i}}$, the result follows. (Note that in the attempted argument of \cite[Theorem 8.6]{green-tao-nilratner} we would obtain different horizontal characters $\eta_j$ for each $j$, which cannot be combined by simple multiplication to give a horizontal character independent of $j$ as we did here.)
\end{proof}

We use the above proposition to obtain a generalisation of \cite[Lemma 4.5]{green-tao-nilratner} to polynomials of several variables. 

\begin{proposition}\label{prop2.1}
Suppose that $g : \Z^t \rightarrow \R$ is a polynomial such that $\Vert g(\vec{n}) \Vert_{\R/\Z} \leq \eps$ for at least $\delta N_1 \ldots N_t$ values of $\vec{n} \in [\vec{N}]$, where $\eps < \delta/10$. Then there is some $Q \ll \delta^{-O(1)}$ such that $\Vert Q g \Vert_{C^{\infty}[\vec{N}]} \ll \delta^{-O(1)} \eps$. In particular, $\Vert Q g(\vec{0}) \Vert_{\R/\Z} \ll \delta^{-O(1)}\eps$.
\end{proposition}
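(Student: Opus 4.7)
The plan is to deduce Proposition~\ref{prop2.1} from Proposition~\ref{prop2.2} by first clearing denominators, then isolating a bounded real remainder, and finally extracting the factor of $\eps$ through multi-variable polynomial interpolation.

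First I would convert the hypothesis into non-equidistribution. Using the $1$-Lipschitz tent $F(x):=\max(\delta/5-|x|,0)$ on $\R/\Z$, one has $\int F\ll\delta^2$ and $F(g(\vec n))\geq\delta/10$ at every point of the good set (this is where $\eps<\delta/10$ enters), so $|\E F(g)-\int F|\gg\delta^2$ and the sequence is not $c\delta^2$-equidistributed. Proposition~\ref{prop2.2} then yields $q_1\ll\delta^{-O(1)}$ with $\Vert q_1 g\Vert_{C^{\infty}[\vec N]}\ll\delta^{-O(1)}$. Writing $q_1 g=\sum_{\vec i}q_1\alpha_{\vec i}\binom{\vec n}{\vec i}$ and rounding each nonzero-$\vec i$ coefficient to its nearest integer $m_{\vec i}$ produces a decomposition $q_1 g=P+S$, where $P:=\sum_{\vec i\neq 0}m_{\vec i}\binom{\vec n}{\vec i}\in\Z$ on $\Z^t$ and $S$ has Taylor coefficients $|\gamma_{\vec i}|\ll\delta^{-O(1)}/\vec N^{\vec i}$ for $\vec i\neq 0$. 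Summing the monomials, $S(\vec n)$ is confined on $[\vec N]$ to an interval of length $\ll\delta^{-O(1)}$ about $q_1\alpha_0$.

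On the good set, $P\in\Z$ forces $\Vert S(\vec n)\Vert_{\R/\Z}=\Vert q_1 g(\vec n)\Vert_{\R/\Z}\leq q_1\eps\ll\delta^{-O(1)}\eps$, so $S(\vec n)$ is within $\delta^{-O(1)}\eps$ of some integer drawn from a fixed set of size $\ll\delta^{-O(1)}$. Pigeonholing picks out a single integer $n^*$ so that $\tilde S:=S-n^*$ satisfies $|\tilde S(\vec n)|\ll\delta^{-O(1)}\eps$ (as a real number) on a subset $E'\subset[\vec N]$ of size $\gg\delta^{O(1)}N_1\cdots N_t$.

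The remaining step is a polynomial-extrapolation claim: if a polynomial of total degree $\leq d$ in $t$ variables is pointwise bounded by $\eps''$ on a $\delta''$-fraction of $[\vec N]$, then its monomial coefficients $\beta_{\vec i}$ satisfy $\vec N^{\vec i}|\beta_{\vec i}|\ll(\delta'')^{-O(1)}\eps''$. I would prove this by induction on $t$: in any slice whose density of good points is $\gg\delta''$ there are $d+1$ good points separated by $\gg\delta'' N_k$ in the free coordinate, whereupon one-variable Lagrange interpolation bounds the polynomial uniformly on that slice; iterating through the coordinates and applying a multi-variate Chebyshev-type inequality yields the coefficient bound. Applied to $\tilde S$ with $\delta''=\delta^{O(1)}$ and $\eps''=\delta^{-O(1)}\eps$, this gives $\Vert\tilde S\Vert_{C_*^{\infty}[\vec N]}\ll\delta^{-O(1)}\eps$. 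Since $P+n^*$ has monomial coefficients with denominator dividing some $L=O(1)$, the same bound holds for $Lq_1 g$, and Lemma~\ref{taylor} converts it to $\Vert Qg\Vert_{C^{\infty}[\vec N]}\ll\delta^{-O(1)}\eps$ with $Q:=rLq_1\ll\delta^{-O(1)}$. The ``in particular'' assertion follows by evaluating $\tilde S$ at $\vec 0$, whose value equals its constant monomial coefficient. The main obstacle is the extrapolation step: one must track how the density of ``good slices'' degrades across the $t$-fold iteration and verify that the constants and the density-loss remain uniform in $N_1,\ldots,N_t$.
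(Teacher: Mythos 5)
Your proposal is correct in outline but takes a genuinely different route from the paper's. Both arguments begin by invoking Proposition~\ref{prop2.2}, but they differ in how the extra factor of $\eps$ is extracted. The paper follows the proof of \cite[Lemma 4.5]{green-tao-nilratner} closely: it applies Proposition~\ref{prop2.2} not once but to the whole family of dilates $\lambda g$ for $1 \leq \lambda \leq \delta/(2\eps)$ (each of which is non-equidistributed since $\Vert \lambda g(\vec n)\Vert_{\R/\Z} \leq \delta/2$ on a density-$\delta$ set), pigeonholes so the denominator $q_\lambda$ is independent of $\lambda$, and then applies the one-variable \cite[Lemma 3.2]{green-tao-nilratner} to the map $\lambda \mapsto \lambda q\alpha_{\vec i}$ -- treating the dilation parameter $\lambda$ as a new one-dimensional variable -- to pick up the factor of $\eps$ for each Taylor coefficient separately. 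You instead apply Proposition~\ref{prop2.2} once, split $q_1 g = P + S$ with $P$ integer-valued and $S$ having small real Taylor coefficients, pigeonhole the value of $S$ into one of $O(\delta^{-O(1)})$ integer windows, and finish with a multivariable Remez/Lagrange interpolation lemma converting the pointwise bound $|\tilde S(\vec n)| \ll \delta^{-O(1)}\eps$ on a positive-density set into monomial-coefficient bounds. Your route is arguably more self-contained and conceptually transparent; the paper's sidesteps the need for a multivariable Remez inequality via the dilation trick, at the cost of repeating the application of Proposition~\ref{prop2.2} roughly $\delta/\eps$ times.

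One caveat you should make explicit: the extrapolation claim, as literally stated, fails when some $N_j$ is small. For $t=1$, $N=2$, $\delta''=1/2$, the polynomial $p(n) = \eps'' + M(n-1)$ is bounded by $\eps''$ on $\{1\}$, a set of density $1/2$, yet $N|\beta_1| = 2M$ is unbounded. The Lagrange step requires $d+1$ good interpolation points with separation $\gg \delta'' N_j$ in each free coordinate, and this needs $\delta'' N_j \geq d+1$; so either one restricts to $N_j \gg (\delta'')^{-1}$, or one admits the alternative conclusion ``$N_j \ll (\delta'')^{-O(1)}$ for some $j$.'' This is not a defect specific to your argument: the paper's invocation of \cite[Lemma 3.2]{green-tao-nilratner} likewise requires the interval length $\delta^{-O(1)}/\vec N^{\vec i}$ to be at most half the density $\delta^{O(1)}$, which again needs $\vec N^{\vec i} \gg \delta^{-O(1)}$. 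Since in the erratum's applications the $N_j$ are always taken $\geq \delta^{-C}$ for large $C$, this causes no trouble, but it is exactly the same small-$N_i$ degeneracy that forced the alternate conclusion into the corrected statement of \cite[Theorem 8.6]{green-tao-nilratner}. The $t$-fold iteration you sketch (slice, interpolate in one variable to upgrade a density bound to a pointwise bound, iterate through coordinates) is sound; the constants compound only $t = O(1)$ times, so the final exponent remains $O_{d,t}(1)$.
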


\begin{proof}  This is essentially the same as the proof of \cite[Lemma 4.5]{green-tao-nilratner}. We include the argument for convenience. 
If $\eps \gg \delta^C$ then the result follows immediately from Proposition \ref{prop2.2}, so assume this is not the case. Expand 
\[ g(\vec{n}) = \sum_{\vec{i} \in \mathscr{I}} \alpha_{\vec{i}} \binom{\vec n}{\vec i} \] as a Taylor series. It follows from the assumption that none of the polynomials $\lambda g$, $\lambda \leq \delta/2\eps$, is $\delta^{O(1)}$-equidistributed on $[\vec{N}]$. Thus by Proposition \ref{prop2.1} we see that for each $\lambda \leq \delta/2\eps$ there is $q_{\lambda} \ll \delta^{-O(1)}$ such that $\Vert q_{\lambda} \lambda \alpha_{\vec{i}} \Vert_{\R/\Z} \ll \delta^{-O(1)} / \vec N^{\vec i}$ for all $\vec{i} \in \mathscr{I}$. Pigeonholing in the possible values of $q_{\lambda}$ we see that there is $q \ll \delta^{-O(1)}$ such that for $\gg \delta^{O(1)}/\eps$ values of $\lambda \leq \delta/2\eps$ we have $\Vert \lambda q \alpha_{\vec{i}} \Vert_{\R/\Z} \ll \delta^{-O(1)} / \vec N^{\vec i}$ for all  $\vec{i} \in \mathscr{I}$. It follows from \cite[Lemma 3.2]{green-tao-nilratner} that for each $\vec{i} \in \mathscr{I}$ there is $q_{\vec{i}} \ll \delta^{-O(1)}$ such that $\Vert q_{\vec{i}} \alpha_{\vec{i}} \Vert_{\R/\Z} \ll \eps \delta^{-O(1)} / \vec N^{\vec i}$. Writing $Q := \prod_{\vec{i} \in \mathscr{I}} q_{\vec{i}}$, we see that $Q \ll \delta^{-O(1)}$ and that $\Vert Q \alpha_{\vec{i}}\Vert_{\R/\Z} \ll \eps \delta^{-O(1)} / \vec N^{\vec i}$ for all $\vec{i} \in \mathscr{I}$. 

To get the final conclusion, note that 
\[ \Vert Q(g (\vec{n}) - g(\vec{0})) \Vert_{\R/\Z} \ll \delta^{-O(1)}\eps\] whenever $\vec{n} \in [\vec{N}]$. Since there is at least one value of $\vec{n}$ such that $\Vert g(\vec{n}) \Vert_{\R/\Z} \leq \eps$, and $Q \ll \delta^{-O(1)}$, the result follows.\end{proof}

We will need the following lemma of Schwartz-Zippel type.

\begin{lemma}[Schwartz-Zippel type lemma]\label{poly}
Let $f : \Z^t \rightarrow \R$ be a non-zero polynomial of degree $d$. Then the number of zeros of $f$ in $[L]^t \subset \Z^t$ is bounded by $O_{d,t}(L^{t-1})$.
\end{lemma}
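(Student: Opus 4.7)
The plan is to prove this by induction on $t$, following the classical Schwartz--Zippel argument (which works over any integral domain, in particular over $\R$).

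For the base case $t=1$, a non-zero real polynomial of degree $d$ has at most $d$ real roots, so the count in $[L]$ is $O_d(1) = O_d(L^0)$.

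For the inductive step, assume the lemma for $t-1$. Given a non-zero polynomial $f : \Z^t \to \R$ of degree $d$, expand as a polynomial in $x_1$:
\[ f(x_1,\ldots,x_t) = \sum_{i=0}^{d} f_i(x_2,\ldots,x_t)\, x_1^{i}, \]
where each $f_i$ is a polynomial of degree at most $d-i$ in $t-1$ variables. Let $k$ be the largest index for which $f_k \not\equiv 0$; this exists since $f \not\equiv 0$. I would then split the count of zeros of $f$ in $[L]^t$ according to the value of $(x_2,\ldots,x_t)$ as follows. For tuples $(x_2,\ldots,x_t) \in [L]^{t-1}$ with $f_k(x_2,\ldots,x_t) = 0$, the inductive hypothesis applied to the non-zero polynomial $f_k$ of degree at most $d-k \leq d$ in $t-1$ variables gives at most $O_{d,t-1}(L^{t-2})$ such tuples, each contributing at most $L$ zeros of $f$ (trivially), for a total of $O_{d,t}(L^{t-1})$ zeros. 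For tuples $(x_2,\ldots,x_t) \in [L]^{t-1}$ with $f_k(x_2,\ldots,x_t) \neq 0$, the one-variable polynomial $x_1 \mapsto f(x_1,x_2,\ldots,x_t)$ is non-zero of degree exactly $k \leq d$, so has at most $d$ real roots, giving at most $d\,L^{t-1}$ zeros in total.

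Adding the two contributions yields the desired bound $O_{d,t}(L^{t-1})$.

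There is no serious obstacle here: the argument is completely standard, and the only mild point to notice is that we are working over $\R$, not a finite field, so the base case uses the fact that a real univariate polynomial of degree $d$ has at most $d$ real zeros (via the factor theorem in $\R[x]$). The integer restriction $\vec n \in [L]^t \subset \Z^t$ plays no role beyond the fact that $\Z \subset \R$, so that integer zeros are also real zeros.
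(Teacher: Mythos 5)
Your proof is correct and follows essentially the same route as the paper: induction on $t$, expanding $f$ as a polynomial in one variable with coefficients in the other $t-1$, and splitting the count according to whether a fixed non-vanishing coefficient polynomial is zero or not. The only cosmetic differences are that you expand in $x_1$ rather than $x_t$ and take the top non-zero coefficient rather than an arbitrary one.
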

\begin{proof}
We proceed by induction on $t$, the result being clear when $t = 1$. Expand
\[ f(n_1,\dots, n_t) = c_d(n_1,\dots, n_{t-1}) n_t^d + \dots + c_0(n_1,\dots, n_{t-1}).\]
For at least one value of $i$ the polynomial $c_i(n_1,\dots, n_{t-1})$ is not identically zero, and hence has $O_{d,t}(L^{t-2})$ roots $(n_1,\dots, n_{t-1}) \in [L]^{t-1}$ by the inductive hypothesis. However if $(n_1,\dots, n_{t-1})$ is not one of these roots then $f$ is nontrivial as a polynomial in $n_t$, and hence is satisfied by no more than $d$ values of $n_t$. 
\end{proof}

\section{Proof of \cite[Theorem 8.6]{green-tao-nilratner} in the case $N_1 = \dots = N_t$}

In this section we have $N_1=\ldots=N_t=N$.

Let $L$ be a positive integer parameter to be specified later (it will be $\delta^{-C}$ for some large $C$), and write $\vec L := (L,\ldots,L)$. Let the notation be as in \cite[Theorem 8.6]{green-tao-nilratner}, as repeated above.  The first step is to cover the cube $[N]^t$ by one-parameter progressions of length $N/L^2$ pointing in various directions.  More precisely, we have

\begin{lemma}\label{lemma1}
Suppose that $(g(\vec{n})\Gamma)_{\vec{n} \in [\vec{N}]}$ fails to be $\delta$-equidistributed. Suppose that $\vec{q}  \in [\vec{L}] = [L]^t$. Suppose that $N > L^2$ and that $L > C/\delta$ for some large $C$. Then $(g(\vec{x} + \vec{q} n)\Gamma)_{n \in [N/L^2]}$ fails to be $\frac{1}{2}\delta$-equidistributed for at least $\frac{1}{4}\delta N^t$ tuples $\vec{x} \in [\vec{N}]$.
\end{lemma}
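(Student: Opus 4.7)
The plan is to transfer non-equidistribution of $g$ on the full box $[\vec N]$ to non-equidistribution along one-dimensional progressions via a Fubini-type averaging argument. First, by the definition of $\delta$-equidistribution (applied to the failure hypothesis), there is a Lipschitz function $F : G/\Gamma \to \C$ with $\Vert F \Vert_{\Lip} \leq 1$ and, after subtracting its mean, $\int F = 0$, satisfying
\[ \Bigl| \E_{\vec n \in [\vec N]} F(g(\vec n)\Gamma) \Bigr| > \delta. \]

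Second, I would compare this global average with the iterated average
\[ \E_{\vec x \in [\vec N]} A(\vec x), \qquad A(\vec x) := \E_{n \in [N/L^2]} F(g(\vec x + n\vec q)\Gamma), \]
by swapping the order of summation. For each fixed $n \leq N/L^2$, since $\vec q \in [L]^t$, every coordinate of $n\vec q$ is at most $N/L$, so the boxes $[\vec N]$ and $[\vec N] + n\vec q$ have symmetric difference of size $O(t N^{t-1} \cdot N/L) = O(N^t/L)$. It follows that
\[ \Bigl| \E_{\vec x \in [\vec N]} A(\vec x) - \E_{\vec n \in [\vec N]} F(g(\vec n)\Gamma) \Bigr| \ll \Vert F \Vert_\infty / L \ll 1/L. \]
Choosing $C$ large enough in the hypothesis $L > C/\delta$ makes this error at most $\delta/8$, so $|\E_{\vec x} A(\vec x)| > 7\delta/8$.

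Third, since $|A(\vec x)| \leq \Vert F \Vert_\infty \leq 1$, a standard first-moment argument (WLOG $\E A > 7\delta/8$, whence if $S = \{\vec x : A(\vec x) > \delta/2\}$ then $7\delta/8 \leq |S|/N^t + \delta/2$) shows $|A(\vec x)| > \delta/2$ on at least a $3\delta/8 > \delta/4$ fraction of $\vec x \in [\vec N]$, giving at least $\tfrac14 \delta N^t$ tuples. For each such $\vec x$, the same $F$ certifies that the one-parameter sequence $(g(\vec x + n\vec q)\Gamma)_{n \in [N/L^2]}$ fails to be $\tfrac12\delta$-equidistributed.

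I expect no serious obstacle; the only step requiring care is the box-shift comparison, which uses crucially that $\vec q \in [L]^t$ and $n \leq N/L^2$ conspire to make $n\vec q$ of $\ell^\infty$-norm $\leq N/L$, giving a relative error of only $O(1/L)$ which is absorbed by the hypothesis $L > C/\delta$.
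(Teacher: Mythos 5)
Your proof is correct and follows essentially the same approach as the paper: find a Lipschitz witness $F$ with zero mean, compare the full-box average with the doubly-averaged progression expression (error $O(\|F\|_{\Lip}/L)$ from shifting the box), and conclude by a first-moment argument. The only nit is that the parenthetical ``WLOG $\E A > 7\delta/8$'' should be phrased in terms of $|\E A|$ and then passing to $\E|A|$, since $A$ is complex-valued, but this does not affect the validity of the argument.
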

\begin{proof}
Since $(g(\vec{n})\Gamma)_{\vec{n} \in [\vec{N}]}$ is not $\delta$-equidistributed, there is some Lipschitz function $F : G/\Gamma \rightarrow \C$, $\int_{G/\Gamma} F = 0$, such that 
\[ |\E_{\vec{n} \in [\vec{N}]} F(g(\vec{n}))| \geq \delta \Vert F \Vert_{\Lip}.\]
However, introducing an additional averaging, the left-hand side is equal to
\[ \E_{\vec{x} \in [\vec{N}]} \E_{n \in [N/L^2]} F(g(\vec{x} + \vec{q} n)) + O(\frac{1}{L}\Vert F \Vert_{\Lip}).\]
In particular if $L > C/\delta$ with $C$ large enough then we have
\[ \E_{\vec{x} \in [\vec{N}]} \E_{n \in [N/L^2]} F(g(\vec{x} + \vec{q} n)) \geq \frac{3}{4}\delta \Vert F \Vert_{\Lip}.\]
It follows that for at least $\frac{1}{4}\delta N^d$ tuples $\vec{x}$ we have
\[ \E_{n \in [N/L^2]} F(g(\vec{x} + \vec{q} n)) \geq \frac{1}{2}\delta\Vert F \Vert_{\Lip},\] and this implies the result.
\end{proof}

Write $p(\vec{n}) := \pi(g(\vec{n}))$, where $\pi$ is projection onto the horizontal torus $(G/\Gamma)_{\ab}$. Recall that the horizontal torus has dimension $m_{\ab}$, so $p$ takes values in $\R^{m_{\ab}}$. The total degree (highest degree of any monomial) of $p$ is at most $d$. Expand

\begin{equation}\label{eq0} p(\vec{x} + \vec{q} n) =\sum_{i=1}^d  \sum_{\vec{i} \in \mathscr{I}: |\vec{i}| = i} c_{\vec{i}}(\vec{x}) \vec{q}^{\vec{i}} n^{i}.\end{equation}
Here, the $c_{\vec{i}}:\Z^t \to \R^{m_{\ab}}$ are polynomials of total degree at most $d$.

Now we claim that the map from $\Z$ to $G$ defined by $n \mapsto g(\vec{x} + \vec{q} n)$ lies in $\poly(\Z, G_{\bullet})$. Indeed the map from $\Z^t$ to $G$ given by $\vec{n} \mapsto g(\vec{x} + \vec{q} \cdot \vec{n})$ lies in $\poly(\Z^t, G_{\bullet})$ by \cite[Corollary 6.8]{green-tao-nilratner}, and so it suffices to check that if $h(\vec{n}) \in \poly(\Z^t, G_{\bullet})$ then the diagonal map $h^{\Delta}(n) := h(n,n,\dots, n)$ lies in $\poly(\Z, G_{\bullet})$.  But this is obvious from the definition, \cite[Definition 6.1]{green-tao-nilratner}.

Suppose that $(g(\vec{x} + \vec{q}n)\Gamma)_{n \in [N/L^2]}$ fails to be $\frac{1}{2}\delta$-equidistributed. By Lemma \ref{lemma1}, for every $\vec{q} \in [\vec{L}]$ this is so for at least $\frac{1}{4}\delta N^t$ values of $\vec{x} \in [\vec{N}]$. By \cite[Theorem 2.9]{green-tao-nilratner}, which is applicable by the claim in the preceding paragraph, the following is therefore true. For all $\vec{q}$ with $0 \leq q_i < L$, there are at least $\frac{1}{4}\delta N^t$ choices of $\vec{x} \in [\vec{N}]$ such that there is some $\xi(\vec{q} , \vec{x}) \in \Z^{m_{\ab}}$, $0 < |\xi(\vec{q}, \vec{x})| \ll \delta^{-O(1)}$, such that 

\[ \big\Vert  \xi(\vec{q}, \vec{x}) \cdot \sum_{\vec{i} : |\vec{i}| = i}  c_{\vec{i}}(\vec{x}) \vec{q}^{\vec{i}}  \big\Vert_{\R/\Z} \ll (L/\delta)^{O(1)}N^{-i}\] for all $i = 1,\dots, d$. 

By the pigeonhole principle there is some $\xi \in \Z^{m_{\ab}}$, $0 < |\xi| \ll \delta^{-O(1)}$, together with a subset $\mathscr{S} \subset [\vec{L}] \times [\vec{N}]$, $|\mathscr{S}| \gg \delta^{O(1)} (LN)^t$, such that $\xi(\vec{q}, \vec{x}) = \xi$ whenever $(\vec{q}, \vec{x}) \in \mathscr{S}$. For each $\vec{q} \in [\vec{L}]$, write $X_q := \{ \vec{x} \in [\vec{N}] : (\vec{q}, \vec{x}) \in \mathscr{S}\}$. Then for $\gg \delta^{O(1)} L^t$ values of $\vec{q} \in [\vec{L}]$ we have $|X_q| \gg \delta^{O(1)} N^t$. Let $\mathscr{Q}$ be the set of such $\vec{q}$.

Thus
\begin{equation}\label{eq1} \big\Vert  \xi \cdot \sum_{\vec{i} : |\vec{i}| = i}  c_{\vec{i}}(\vec{x}) \vec{q}^{\vec{i}}\big\Vert_{\R/\Z} \ll (L/\delta)^{O(1)}N^{-i}\end{equation}whenever $\vec{x} \in X_{\vec{q}}$, and for all $i = 1,\dots, d$, and if $\vec{q} \in \mathscr{Q}$ then $|X_{\vec{q}}| \gg \delta^{C'} N^t$.

Now we apply Proposition \ref{prop2.1}, with $g : \Z^t \rightarrow \R$ given by
\[ g(\vec{x}) = g_{i, \vec{q}}(\vec{x})  := \xi \cdot \sum_{\vec{i} : |\vec{i}| = i}  c_{\vec{i}}(\vec{x}) \vec{q}^{\vec{i}}.\]
If $N > (L/\delta)^{C}$ for $C$ large enough then $\eps := (L/\delta)^{O(1)}N^{-i}$ is small enough that $\eps < \frac{1}{10}\delta^{C'}$ and so the proposition applies.

We conclude that for each $\vec{q} \in \mathscr{Q}$ and for each $i = 1,\dots, d$ there is some $Q_i = Q_i(\vec{q})$, $Q_i(\vec{q}) \ll \delta^{-O(1)}$, such that 

\[ \big\Vert  Q_i(\vec{q}) \xi \cdot \sum_{\vec{i} : |\vec{i}| = i} c_{\vec{i}}(\vec{0}) \vec{q}^{\vec{i}}\big\Vert_{\R/\Z} \ll (L/\delta)^{O(1)} N^{-i}.\]

Since there are $\ll \delta^{-O(1)}$ possibilities for $(Q_1(\vec{q}),\dots, Q_d(\vec{q}))$, we may pass to a set $\mathscr{Q}' \subset \mathscr{Q}$, $|\mathscr{Q}'| \gg \delta^{O(1)} L^d$, such that $Q_i(\vec{q}) = Q_i$ is independent of $\vec{q}$ as $\vec{q}$ ranges over $\mathscr{Q}'$. Setting $\tilde \xi := Q_1 \dots Q_d \xi$, we then have

\begin{equation}\label{eq2} \Vert \tilde\xi \cdot \sum_{\vec{i} : |\vec{i}| = i}  c _{\vec{i}}(\vec{0}) \vec{q}^{\vec{i}}\Vert_{\R/\Z} \ll (L/\delta)^{O(1)} N^{-i}.\end{equation} for all $\vec{q} \in \mathscr{Q}'$ and for all $i = 1,\dots,d$.

We claim that if $L = \delta^{-C}$ with $C$ big enough then as a consequence of \eqref{eq2} we have

\begin{equation}\label{eq7} \big\Vert \tilde{\tilde{\xi}}\cdot  c_{\vec{i}}(\vec{0}) \big\Vert_{\R/\Z} \ll  (L/\delta)^{O(1)}N^{-i} \ll \delta^{-O(1)} N^{-i}\end{equation} for all $\vec{i} \in \mathscr{I}$, where $\tilde{\tilde{\xi}} = \tilde Q \tilde \xi$ with $|\tilde Q| \ll \delta^{-O(1)}$.

Leaving the proof of this claim aside for the moment, setting $\vec{x} = \vec{0}$ and $n = 1$ in \eqref{eq0} reveals that 
\[ p(\vec{n}) = \sum_{\vec{i} \in \mathscr{I}} c_{\vec{i}}(\vec{0}) \vec{n}^{\vec{i}},\] and so \eqref{eq7} implies that $\Vert \tilde{\tilde{\xi}} \cdot p \Vert_{C_*^{\infty}[\vec{N}] }\ll \delta^{-O(1)}$. By Lemma \ref{taylor}, there is some $r = O(1)$ such that $\Vert r \tilde{\tilde{\xi}} \cdot p \Vert_{C^{\infty}[\vec{N}] }\ll \delta^{-O(1)}$.  Defining the horizontal character $\eta$ to be $r\tilde{\tilde{\xi}} \cdot \pi$, this concludes the proof of \cite[Theorem 8.6]{green-tao-nilratner} in the case $N_1 = \dots = N_t = N$.

It remains to check the claim \eqref{eq7}. We do this by taking linear combinations of \eqref{eq2} for different $\vec{q} \in \mathscr{Q}'$ in order to isolate each individual Taylor coefficient $c_{\vec{i}}(\vec{0})$. The key input is the following lemma.

\begin{lemma}\label{lemma3}
Let $\mathscr{Q} \subset [L]^t$ be a set of size $\eps L^t$, and to each $\vec{q} \in \mathscr{Q}$ associate the vector $v_{\vec{q}} := (\vec{q}^{\vec{i}})_{\vec{i} \in \mathscr{I}} \in \Q^{\mathscr{I}}$. Then, provided $L > C/\eps$, the $v_{\vec{q}}$ span $\Q^{\mathscr{I}}$.
\end{lemma}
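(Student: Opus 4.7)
The plan is to dualise and invoke the Schwartz--Zippel type bound from Lemma \ref{poly}. Since $\Q^{\mathscr{I}}$ is finite-dimensional, the vectors $v_{\vec{q}}$ fail to span it precisely when there exists a non-zero linear functional on $\Q^{\mathscr{I}}$ that annihilates every $v_{\vec{q}}$ with $\vec{q} \in \mathscr{Q}$. Such a functional is encoded by a tuple of coefficients $(a_{\vec{i}})_{\vec{i} \in \mathscr{I}}$, not all zero, and pairing it against $v_{\vec{q}}$ gives
\[ P(\vec{q}) := \sum_{\vec{i} \in \mathscr{I}} a_{\vec{i}} \vec{q}^{\vec{i}}. \]
Thus non-spanning is equivalent to the existence of a non-zero polynomial $P : \Z^t \to \R$ of total degree at most $d$ that vanishes identically on $\mathscr{Q}$. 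The goal is then to contradict this using the density hypothesis $|\mathscr{Q}| \geq \eps L^t$.

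For the contradiction I would apply Lemma \ref{poly} directly to $P$, which bounds the size of its zero set inside $[L]^t$ by $O_{d,t}(L^{t-1})$. Since every point of $\mathscr{Q} \subset [L]^t$ is a zero of $P$, this forces $\eps L^t \leq O_{d,t}(L^{t-1})$, i.e.\ $L \leq C_{d,t}/\eps$ for some constant $C_{d,t}$ depending only on $d$ and $t$. Taking $C = C(d,t)$ sufficiently large in the hypothesis $L > C/\eps$ then rules this out, producing the required contradiction and showing the $v_{\vec{q}}$ span $\Q^{\mathscr{I}}$. There is no substantive obstacle in the argument: all the real work has been done in establishing Lemma \ref{poly}, and the remainder is a routine duality observation.
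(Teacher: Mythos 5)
Your proof is correct and is essentially the same argument as in the paper: assume the $v_{\vec q}$ fail to span, extract a nonzero annihilating functional, identify it with a nonzero polynomial of total degree at most $d$ vanishing on $\mathscr{Q}$, and derive a contradiction from the Schwartz--Zippel bound of Lemma \ref{poly} together with $|\mathscr{Q}| = \eps L^t$ and $L > C/\eps$.
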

\begin{proof}
If not, there is some $w \in \Q^{\mathscr{I}}$ such that $w \cdot v_{\vec{q}} = 0$ for all $\vec{q} \in \mathscr{Q}$. Thus
\[ \sum_{\vec{i}} w_{\vec{i}} \vec{q}^{\vec{i}} = 0\] whenever $\vec{q} \in \mathscr{Q}$. This is a polynomial equation of total degree $i$ in $q_1,\dots, q_t$, and it is not the trivial polynomial. Therefore by Lemma \ref{poly} this equation has $O(L^{t-1}) < |\mathscr{Q}|$ solutions, contrary to assumption.
\end{proof}

Returning to our proof of the claim \eqref{eq7}, take $L = \delta^{-C}$ large enough that Lemma \ref{lemma3} applies (with $\eps := |\mathscr{Q}'|/N^t$). Then for each $\vec{i} \in \mathscr{I}$ we may select $\vec{q}_1,\dots, \vec{q}_{|\mathscr{I}|} \in \mathscr{Q}'$ and rationals $\gamma_m$ such that 
\[ \mathbf{1}_{\vec{i} = \vec{j}}  = \sum_{m=1}^{|\mathscr{I}|} \gamma_m \vec{q}_m^{\vec{j}} .\]
Inverting these linear relations using the adjoint formula for the inverse (or by using Siegel's lemma), we see that the $\gamma_m$ are all rationals of height $\ll \delta^{-O(1)}$. Taking $\tilde Q$ to be the product of the denominators of all these $\gamma_m$, across all values of $\vec{i} \in \mathscr{I}$, we have $\tilde{Q} \ll \delta^{-O(1)}$ and now
\[ \tilde{Q} \mathbf{1}_{\vec{i} = \vec{j}} = \sum_{m = 1}^{|\mathscr{I}|} \gamma'_m \vec{q}_m^{\vec{j}}\] with the $\gamma'_m$ being \emph{integers} of size at most $\delta^{-O(1)}$. We may now take appropriate linear combinations of \eqref{eq2} to get the claim \eqref{eq7}, thereby concluding the argument.

\section{Removing the restriction $N_1 = \dots = N_t$}\label{rerun-sec}

As remarked in the introduction, we have been unable to deduce \cite[Theorem 8.6]{green-tao-nilratner} as stated (with the parameters $N_1,\dots, N_t$ not all equal) from the 1-parameter statement. To obtain this result we instead need to rerun the entire argument of the first seven sections of \cite{green-tao-nilratner} in the multiparameter setting. Unfortunately this makes an already complicated argument look even more fearsome. In this section we write $[\vec{N}] = [N_1] \times \dots \times [N_t]$.

Note first of all that the result follows from the following multidimensional version of \cite[Theorem 7.1]{green-tao-nilratner}.

\begin{theorem}[Variant of Main Theorem]\label{mainthm-ind}
Let $m_*,m,d,t \geq 0$ be integers with $m_* \leq m$. Let $0 < \delta < 1/2$ and suppose that $N_1,\ldots,N_t \geq 1$. Suppose that $G/\Gamma$ is a nilmanifold and that $G_{\bullet}$ is a filtration of degree $d$ and with nonlinearity degree $m_*$. Suppose that $\mathcal{X}$ is a $1/\delta$-rational Mal'cev basis adapted to $G_{\bullet}$ and suppose that $g \in \poly(\Z,G_{\bullet})$. If $(g(\vec n)\Gamma)_{\vec n \in [\vec N]}$ is not $\delta$-equidistributed then there is a horizontal character $\eta$ with $0 < |\eta| \ll \delta^{-O_{m,m_*,d,t}(1)}$ such that 
\[ \Vert \eta \circ g \Vert_{C^{\infty}[\vec N]} \ll \delta^{-O_{m,m_*,d,t}(1)}.\] 
\end{theorem}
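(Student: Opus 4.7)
The plan is to mimic the inductive structure of the proof of \cite[Theorem 7.1]{green-tao-nilratner}, but with every ingredient extended to the multiparameter setting. I would induct on the nonlinearity degree $m_*$, with the base case $m_* = 0$ corresponding to the situation $G_2 = \{e\}$, so that $G$ is abelian and $g$ is a polynomial of degree $d$ into an abelian Lie group. Passing to coordinates on the horizontal torus, any non-equidistribution on $[\vec{N}]$ forces some nonzero character direction $\xi$ to yield a polynomial $\xi \cdot g : \Z^t \to \R/\Z$ which itself fails to be $\delta^{O(1)}$-equidistributed, so Proposition \ref{prop2.2} directly supplies the required character.

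For the inductive step, I would perform a vertical Fourier decomposition on the central torus of the top nontrivial piece of the filtration, as in \cite[Section 7]{green-tao-nilratner}. Pigeonholing a vertical frequency $\xi$ with a large Fourier coefficient reduces matters to analysing a polynomial sequence in the lower-dimensional nilmanifold $G'/\Gamma'$ obtained by quotienting by $\ker(\xi)$, which has strictly smaller nonlinearity degree. The key analytic input is a multiparameter van der Corput inequality, obtained by iterating the one-parameter version in each coordinate direction separately; this converts non-equidistribution of $g$ on $[\vec{N}]$ into non-equidistribution of derivative sequences $\vec{n} \mapsto g(\vec{n} + \vec{h})g(\vec{n})^{-1}$, viewed as polynomial maps of degree $d$ and lower nonlinearity degree, jointly on the $2t$ variables $(\vec{n},\vec{h})$.

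Applying the inductive hypothesis to the derivative sequence produces, for each shift $\vec{h}$ in a set of positive density inside $[\vec{H}]$, a horizontal character $\eta_{\vec{h}}$ of small norm making $\eta_{\vec{h}} \circ g_{\vec{h}}$ smooth in $C^\infty[\vec{N}]$. Pigeonholing $\eta_{\vec{h}} = \eta$ independent of $\vec{h}$ and reading off each multi-index $\vec{i}$ separately gives density statements of the form $\Vert \text{(Taylor coefficient of }\eta \circ g\text{ at }\vec{h}\text{)} \Vert_{\R/\Z} \ll \delta^{-O(1)}/\vec{N}^{\vec{i}}$ on a positive-density set of $\vec{h} \in [\vec{H}]$. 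Proposition \ref{prop2.1} (the multivariate replacement for \cite[Lemma 4.5]{green-tao-nilratner}) then converts density into quantitative smoothness of the Taylor coefficients of $\eta \circ g$ in the $C^\infty_*[\vec{N}]$ norm, and Lemma \ref{taylor} rephrases this as the required $C^\infty[\vec{N}]$ bound, closing the induction.

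The main obstacle will be the faithful multiparameter reconstruction of the supporting results from \cite[Sections 3--6]{green-tao-nilratner}, most notably the basic recurrence result \cite[Proposition 3.1]{green-tao-nilratner} and the factorization theorem \cite[Theorem 1.19]{green-tao-nilratner}, which must be reproven with multiparameter $C^\infty[\vec{N}]$ norms in place of their one-parameter counterparts. The Schwartz--Zippel-type Lemma \ref{poly} and the Vandermonde-style isolation of Lemma \ref{lemma3}, already in place, supply the right linear-algebraic tools for extracting individual Taylor coefficients $\beta_{\vec{i}}$ from aggregate density bounds, so the obstruction is essentially bookkeeping rather than conceptual. Care is needed to ensure that all implied constants depend only on $d$, $m$, $t$ and not on individual $N_i$; this is precisely where the hypothesis $N_i \gg \delta^{-C}$ (or the alternative conclusion allowing some $N_i \ll \delta^{-O(1)}$) must be inserted to rule out the degenerate configurations illustrated by the counterexample in the introduction.
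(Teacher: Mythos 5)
Your high-level outline (induct, pigeonhole a vertical frequency, van der Corput, pigeonhole a single character $\eta$, then convert density into $C^\infty[\vec N]$-smoothness of Taylor coefficients via Proposition~\ref{prop2.1} and Lemma~\ref{taylor}) is in the right spirit, but the substance of the argument is not where you place it, and several of the mechanisms you describe are not what actually makes the induction close.

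First, the derivative sequence cannot simply be $\vec n \mapsto g(\vec n + \vec h) g(\vec n)^{-1}$ viewed as a polynomial map ``of degree $d$ and lower nonlinearity degree.'' That object does not live naturally in a filtration to which the inductive hypothesis applies. The paper instead first factors $g = g_\nlin g_\lin$, then builds the pair $g_{\vec h}(\vec n)$ as in \eqref{gh-def}, shows it takes values in the fiber product $G^\Box = G \times_{G_2} G$, and then quotients by $G_d^\Delta$ to obtain $\overline{G^\Box} = G^\Box/G_d^\Delta$, which carries a filtration of \emph{degree $d-1$}. The reduction that drives the induction is thus a degree reduction, not a nonlinearity reduction, and the construction of $g_{\vec h}^\Box$ (including the conjugations needed to verify $g_{\vec h}^\Box \in \poly(\Z^t, (G^\Box)_\bullet)$) is where most of the work is. Relatedly, your claim that pigeonholing a vertical frequency $\xi$ lets you pass to $G/\ker(\xi)$ with smaller nonlinearity degree does not match the paper: the $\xi = 0$ case descends to $G/G_d$ (degree reduction again), and the $\xi \neq 0$ case goes through $G^\Box$. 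The nonlinearity-degree reduction does occur, but only at the very end of the argument (the analogue of Lemma 7.8 of \cite{green-tao-nilratner}), after one has shown $\eta_2$ annihilates $[G,G]$.

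Second, you omit the genuinely technical core. After pigeonholing $\eta$, one must use the decomposition $\eta(g',g) = \eta_1(g) + \eta_2(g'g^{-1})$ (the analogue of \cite[Lemma 7.5]{green-tao-nilratner}), expand $\eta(g_{\vec h}^\Box(\vec n))$ into the form $P(\vec n) + Q(\vec n + \vec h) - Q(\vec n) + \sum_i \sigma_i(\vec h) n_i$ using the bilinearity of $(b,c)\mapsto \eta_2([b,c])$, take second-order differences, and then invoke Lemma~\ref{claim77gen}, the multiparameter version of \cite[Claim 7.7]{green-tao-nilratner}, which itself rests on multiparameter versions of \cite[Proposition 3.1, Lemma 3.2, Proposition 5.3]{green-tao-nilratner}. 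None of this appears in your sketch. Finally, the supporting lemmas you do cite are partly mislocated: the Schwartz--Zippel Lemma~\ref{poly} and the Vandermonde isolation Lemma~\ref{lemma3} belong to the equal-sides proof of Section 3, not this re-run; and \cite[Theorem 1.19]{green-tao-nilratner} is a \emph{consequence} of the main theorem, not an ingredient of its proof, so it is not something that needs to be reproven along the way.
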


As in \cite[\S 7]{green-tao-nilratner}, we prove this by induction on $d$ and $m_*$, assuming that the claim has already been proven for smaller $d$ (and arbitrary $m_*$), or for the same value of $d$ and a lesser value of $m_*$.

We allow all implied constants to depend on $m_*,d,t$.
By \cite[Lemma 3.7]{green-tao-nilratner} (which extends without difficulty to the multidimensional case), 
we may assume that the orbit $(g(\vec n)\Gamma)_{\vec n \in [\vec N]}$ is not $\delta^{O(1)}$-equidistributed along some vertical frequency $\xi \in \Z^{m_d}$ with $|\xi| \ll \delta^{-O(1)}$. Thus there is some function $F : G/\Gamma \rightarrow \C$ with $\Vert F \Vert_{\Lip} \leq 1$ and vertical frequency $\xi$ such that 
\begin{equation}\label{non-equi} |\E_{\vec n \in [\vec N]} F(g(\vec n)\Gamma) - \int_{G/\Gamma} F| \gg \delta^{O(1)}.\end{equation}

If $\xi = 0$ then $F$ is $G_d$-invariant and we may descend to $G/G_d$, together with the filtration $G_{\bullet}/G_d$ which has length $d-1$, and invoke our inductive hypothesis, exactly as in \cite[\S 7]{green-tao-nilratner}.  Thus we may assume that $\xi \neq 0$.  Since $F$ has $\xi$ as a vertical frequency, \eqref{non-equi} becomes
\begin{equation}\label{non-equi-2} |\E_{\vec{n} \in [\vec{N}]} F(g(\vec{n})\Gamma)| \gg \delta^{O(1)}.\end{equation}

Arguing exactly as in \cite[\S 7]{green-tao-nilratner}, we may reduce to the case $g(0)=\id_G$.  Next, we reduce to the case when $\psi(g(e_i)) \in  [0,1]^m$ for $i=1,\ldots,t$, where $e_1,\ldots,e_t$ is the standard basis for $\Z^t$.  To reduce to this case, we factorise $g(e_i) = \{g(e_i)\} [g(e_i)]$ as in \cite[Lemma A.14]{green-tao-nilratner}, then set
$$ \tilde g(\vec n) := g(\vec n) [g(e_1)]^{-n_1} \ldots [g(e_t)]^{-n_t}.$$
Then $\tilde g(\vec n)\Gamma = g(\vec n)\Gamma$, $\tilde g(0) = \id_G$, $\tilde g \in \poly(\Z^t,G_{\bullet})$ and $\pi(\tilde g(\vec n)\Gamma) = \pi(g(\vec n)\Gamma)$, so proving Theorem \ref{mainthm-ind} for $g$ is equivalent to proving it for $\tilde g$.  As $\tilde g(e_i) = \{ g(e_i) \}$, we have thus reduced to the case $\psi(g(e_i)) \in [0,1]^m$ for $i=1,\ldots,t$ as desired.

Henceforth we assume $g(0) = \id_G$ and $\psi(g(e_i)) \in [0,1]^m$ for $i=1,\ldots,t$.  We then apply \cite[Corollary 4.2]{green-tao-nilratner} (extended to higher dimensions in the obvious fashion) to deduce that for $\gg \delta^{O(1)}N$ values of $\vec h \in [-\vec N, \vec N]$, we have
\begin{equation}\label{eq801} |\E_{\vec n \in [\vec N]} F(g(\vec n+\vec h)\Gamma) \overline{F(g(\vec n) \Gamma)}| \gg \delta^{O(1)}.\end{equation}

We factor $g = g_\nonlin g_\lin$, where
$$ g_\lin(\vec n) := g(e_1)^{n_1} \ldots g(e_t)^{n_t}$$
is the \emph{linear part} of $g$, and
$$ g_\nonlin(\vec n) := g(\vec n) g_\lin(\vec n)^{-1}$$
is the \emph{nonlinear part} of $g$. Both maps are polynomial maps from $\Z^t$ to $G$; note that $g_\nonlin$ takes the values $1_G$ at $0$ and at $e_1,\ldots,e_t$, and so by Taylor expansion we may view $g_\nonlin$ as the product of finitely many functions of the form $\vec n \mapsto g_{\vec j}^{\binom{\vec n}{\vec j}}$ for some $\vec j$ with $|\vec j| \geq 2$, and some $g_{\vec j} \in G_{|\vec j|}$.  In particular, $g_\nonlin$ takes values in $G_2$.  We may then rewrite \eqref{eq801} as
\begin{equation}\label{use-soon-1} |\E_{\vec n \in [\vec N]} F_{\vec h}(g_{\vec h}(\vec n)\Gamma^2) | \gg \delta^{O(1)},\end{equation}
where $F_{\vec h}: G^2/\Gamma^2 \to \C$ is the function
\[ F_{\vec h}(x,y) := F(\{g_\lin(\vec h)\} x) \overline{F(y)}\] and $g_{\vec h}: \Z^t \to G^2/\Gamma^2$ is the sequence
\begin{equation}\label{gh-def} g_{\vec h}(\vec n) := (\{g_\lin(\vec h)\}^{-1} g_{\nonlin}(\vec n+\vec h) g_\lin(\vec n + \vec h) [g_\lin(\vec h)]^{-1}, g(\vec{n})).\end{equation}
One can check that $g_{\vec h}$ takes values in $G^{\Box} := G \times_{G_2} G$.
We may therefore replace \eqref{use-soon-1} by 
\begin{equation}\label{use-soon-1point1}
| \E_{\vec n \in [\vec N]} F^{\Box}_{\vec h} (g^{\Box}_{\vec h}(\vec n)\Gamma^{\Box})| \gg \delta^{O(1)}
\end{equation}
by restricting everything in that equation to an object on $G^{\Box}$, thus for instance $g^\Box_{\vec h}: \Z^t \to G^\Box$ is the map $g_{\vec h}: \Z^t \to G^2$ with range restricted to $G^\Box$.

By repeating the arguments in \cite[\S 7]{green-tao-nilratner}, $F_{\vec h}^{\Box}$ is invariant under $G_d^{\Delta} = \{(g_d,g_d) : g_d \in G_d\}$. 
Thus $F_{\vec h}^{\Box}$ descends to a function $\overline{F^{\Box}_{\vec h}}$ on $\overline{G^{\Box}} := G^{\Box}/G_d^{\Delta}$ and we may write \eqref{use-soon-1point1} as
\begin{equation}\label{use-soon-1point2}
| \E_{\vec n \in [\vec N]} \overline{F^{\Box}_{\vec h}} (\overline{g^{\Box}_{\vec h}}(\vec n)\overline{\Gamma^{\Box}})| \gg \delta^{O(1)},
\end{equation}
where $\overline{\Gamma^{\Box}} := \Gamma^{\Box}/(\Gamma \cap G_d^{\Delta})$.

We now have the key degree reduction proposition.

\begin{proposition}[Reduction in degree]
Define $(G^{\Box})_i := G_i \times_{G_{i+1}} G_i$ for $i = 1,\dots,d$. Then $(G^{\Box})_{\bullet}$ is a filtration on $G^{\Box}$ of degree $d$. Since $(G^{\Box})_d = G_d^{\Delta}$, it descends under quotienting by $G_d^{\Delta}$ to a filtration $\overline{(G^{\Box})}_{\bullet}$ of degree $d-1$ on $\overline{G^{\Box}}$. Each polynomial sequence $g_{\vec h}^{\Box}$ lies in $\poly(\Z^t,(G^{\Box})_{\bullet})$, and hence each reduced polynomial sequence $\overline{g_{\vec h}^{\Box}}$ lies in $\poly(\Z^t,(\overline{G^{\Box}})_{\bullet})$.
\end{proposition}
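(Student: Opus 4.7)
The plan is to mimic the corresponding step in \cite[\S 7]{green-tao-nilratner}, which treats the case $t=1$, and to observe that every input used there has a multiparameter analogue already set up in the earlier sections of this erratum (or a straightforward multiparameter extension of a result in \cite{green-tao-nilratner}). The proof splits naturally into three independent tasks, of which only the last involves the sequences $g_{\vec h}^{\Box}$.

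First I would verify directly that $(G^{\Box})_{\bullet}$ is a filtration of degree $d$. The inclusions $(G^{\Box})_i \supseteq (G^{\Box})_{i+1}$ are immediate from the definition, and since $G_{d+1} = \{\id_G\}$ one has $(G^{\Box})_d = G_d \times_{G_{d+1}} G_d = G_d^{\Delta}$ and $(G^{\Box})_{d+1} = \{\id_{G^{\Box}}\}$. The commutator inclusion $[(G^{\Box})_i,(G^{\Box})_j] \subseteq (G^{\Box})_{i+j}$ is obtained componentwise from $[G_i,G_j] \subseteq G_{i+j}$; one also has to check that commutators of fibre-product elements still lie in the fibre product at level $i+j$, which is the identity $[g_i h_{i+1}, g_j h_{j+1}] \equiv [g_i,g_j] \pmod{G_{i+j+1}}$ combined with $[G_i,G_j] \subseteq G_{i+j}$ applied separately to the two coordinates.

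Next, the quotient step is routine. Since $(G^{\Box})_d = G_d^{\Delta}$ is central in $G^{\Box}$ (being contained in the centre of $G \times G$ at depth $d$), the subgroups $\overline{(G^{\Box})_i} := (G^{\Box})_i/G_d^{\Delta}$ are well-defined, and the commutator axiom descends. By construction the top nontrivial layer is $\overline{(G^{\Box})_{d-1}}$, so this is a filtration of degree $d-1$.

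The main point is to show $g_{\vec h}^{\Box} \in \poly(\Z^t,(G^{\Box})_{\bullet})$. I would first check the coarse statement that $g_{\vec h}^{\Box}$ even takes values in $G^{\Box}$: projecting both coordinates of \eqref{gh-def} to the horizontal torus, the integer part $[g_\lin(\vec h)]$ disappears modulo $\Gamma$, the fractional part cancels with its inverse, and the linearity of $\pi \circ g_\lin$ in $\vec n$ makes the two projections agree, so the two components differ by an element of $G_2$ as required. For the finer filtration statement, factor $g_{\vec h}$ as $g_{\vec h}(\vec n) = A(\vec h) \cdot B_{\vec h}(\vec n) \cdot (g_\nonlin(\vec n), \id_G) \cdot (g_\lin(\vec n), g_\lin(\vec n))$ (up to a conjugation), where each factor can be verified to lie in the appropriate level of $(G^{\Box})_{\bullet}$: the diagonal factor lives in $(G^{\Box})_1$; the factor with $g_\nonlin$ in the first coordinate only lies in $(G^{\Box})_2$ since $g_\nonlin$ takes values in $G_2$; and the shift by $\vec h$ produces only commutator-type corrections of higher weight, as in the $t=1$ computation of \cite[\S 7]{green-tao-nilratner}. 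This uses the multiparameter version of \cite[Corollary 6.8]{green-tao-nilratner} (polynomial sequences are closed under composition with affine maps $\vec n \mapsto \vec n + \vec h$ and under products), which is a straightforward generalisation. The assertion for $\overline{g_{\vec h}^{\Box}}$ is then automatic: quotienting by the normal subgroup $G_d^{\Delta}$ sends $\poly(\Z^t,(G^{\Box})_{\bullet})$ into $\poly(\Z^t,\overline{(G^{\Box})_{\bullet}})$.

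The only mildly delicate point, and the one I would expect to occupy most of the write-up, is the bookkeeping in the middle paragraph that tracks the weights of the various factors of $g_{\vec h}^{\Box}$; everything else is a mechanical translation of the $t=1$ argument, since at no stage does the argument interact with the specific geometry of the one-dimensional time variable.
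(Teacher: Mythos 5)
Your outline for the first two paragraphs (filtration axioms and the quotient step) is fine and matches the paper's reasoning in spirit, although the paper simply cites \cite[Proposition 7.2]{green-tao-nilratner} for the filtration claim rather than reproving it. The coarse check that $g_{\vec h}^{\Box}$ takes values in $G^{\Box}$ is also fine.

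The gap is in your third paragraph: the proposed decomposition $g_{\vec h}(\vec n) = A(\vec h)\cdot B_{\vec h}(\vec n)\cdot(g_\nonlin(\vec n),\id_G)\cdot(g_\lin(\vec n),g_\lin(\vec n))$ isolates a factor $(g_\nonlin(\vec n),\id_G)$ which does \emph{not} lie in $\poly(\Z^t,(G^{\Box})_{\bullet})$. The issue is the fibre-product structure: $(x,y)\in(G^{\Box})_i$ requires not only $x,y\in G_i$ but also $xy^{-1}\in G_{i+1}$, so $(x,\id_G)\in(G^{\Box})_i$ forces $x\in G_{i+1}$. The $i$-th discrete derivative of $(g_\nonlin,\id_G)$ is $(\partial^{\vec j}g_\nonlin,\id_G)$, and $\partial^{\vec j}g_\nonlin$ lies in $G_{|\vec j|}$ but generically not in $G_{|\vec j|+1}$ (already at $|\vec j|=2$ this fails whenever $g_\nonlin$ has a nontrivial quadratic Taylor coefficient in $G_2\setminus G_3$). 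So your claim that this factor ``lies in $(G^{\Box})_2$ since $g_\nonlin$ takes values in $G_2$'' conflates pointwise membership with polynomiality relative to $(G^{\Box})_{\bullet}$. The correct decomposition — the one the paper uses — pairs the nonlinear parts at the shifted and unshifted times, $(g_\nonlin(\vec n+\vec h),g_\nonlin(\vec n))$; then the ratio of the two coordinates of each iterated derivative is itself a discrete derivative, which gains exactly the one extra level of the filtration required for membership in $(G^{\Box})_i$. This is the content of \cite[Proposition 7.2]{green-tao-nilratner} and it genuinely uses the $\vec h$-shift in both slots.

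You also defer the ``delicate bookkeeping'' to the end without identifying what it is. In the paper it is precisely the treatment of the remaining \emph{linear} factor $\vec n\mapsto(g_\lin(\vec n+\vec h)g_\lin(\vec h)^{-1},\,g_\lin(\vec n))$. In the multiparameter setting this factor cannot be dispatched in one step: one expands it in the generators $g(e_1),\dots,g(e_t)$, conjugates by $(g(e_1)^{h_1},1_G)$, factors out the diagonal $(g(e_1)^{n_1},g(e_1)^{n_1})$, and iterates this procedure $t$ times to remove each generator in turn. This iterative peeling is the only step where the multiparameter case differs materially from $t=1$, and it is exactly what is missing from your sketch. With the corrected pairing and this iterative argument spelled out, the rest of your outline goes through.
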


\begin{proof}  The first part of this proposition follows from \cite[Proposition 7.2]{green-tao-nilratner}; the remaining task is to show that
$g_{\vec h}^{\Box}$ lies in $\poly(\Z^t,(G^{\Box})_{\bullet})$.  Conjugating by $(\{g_\lin(\vec h)\}, 1_G) \in G^2$, it suffices to show that
$$
\vec n \mapsto (g_{\nonlin}(\vec n+\vec h) g_\lin(\vec n +\vec h) g_\lin(\vec h), g_{\nonlin}(\vec n) g_\lin(\vec n))$$
lies in $\poly(\Z^t,(G^{\Box})_{\bullet})$.  By arguing as in the proof of \cite[Proposition 7.2]{green-tao-nilratner},
$$
\vec n \mapsto (g_{\nonlin}(\vec n+\vec h), g_{\nonlin}(\vec n))$$
already lies in $\poly(\Z^t,(G^{\Box})_{\bullet})$, so it suffices to show that
$$
\vec n \mapsto (g_{\lin}(\vec n+\vec h) g_\lin(\vec h)^{-1}, g_{\nonlin}(\vec n))$$
lies in $\poly(\Z^t,(G^{\Box})_{\bullet})$.  We expand this as
$$
\vec n \mapsto (g(e_1)^{n_1+h_1} \ldots g(e_t)^{n_t+h_t} g(e_t)^{-h_t} \ldots g(e_1)^{-h_1}, g(e_1)^{n_1} \ldots g(e_t)^{n_t}).$$
Conjugating by $(g(e_1)^{h_1},1_G)$ and then factoring out $(g(e_1)^{n_1}, g(e_1)^{n_1})$, we may remove all factors of $g(e_1)$ here; iterating this procedure $t-1$ further times we obtain the claim.
\end{proof}

Now recall the following.

\begin{lem74rpt}
There is an $O(\delta^{-O(1)})$-rational Mal'cev basis $\mathcal{X}^{\Box} = \{X^{\Box}_1,\dots,X^{\Box}_{m^{\Box}}\}$ for $G^{\Box}/\Gamma^{\Box}$ adapted to the filtration $(G^{\Box})_{\bullet}$ with the property that $\psi_{\X^{\Box}}(x,x')$ is a polynomial of degree $O(1)$ with rational coefficients of height $\delta^{-O(1)}$ in the coordinates $\psi(x), \psi(x')$. With respect to the metric $d_{\mathcal{X}^{\Box}}$ we have $\Vert F^{\Box}_{\vec h} \Vert_{\Lip} \ll \delta^{-O(1)}$ uniformly in $\vec h$.
\end{lem74rpt}

By reducing the first $\overline{m^{\Box}} := m^{\Box} - m_d$ elements of $\mathcal{X}^{\Box}$ we obtain an $O(\delta^{-O(1)})$-rational Mal'cev basis $\overline{\mathcal{X}^{\Box}} = \{\overline{X^{\Box}_1},\dots,\overline{X^{\Box}_{\overline{m^{\Box}}}}\}$ for $\overline{G^{\Box}}/\overline{\Gamma^{\Box}}$ adapted to the filtration $\overline{(G^{\Box})}_{\bullet}$. With respect to the metric $d_{\overline{\mathcal{X}^{\Box}}}$ we have $\Vert \overline{F_{\vec h}^{\Box}} \Vert_{\Lip} \ll \delta^{-O(1)}$.

Since $\overline{(G^{\Box})}_{\bullet}$ has degree $d-1$ our inductive hypothesis is applicable and we conclude that for $\gg \delta^{O(1)} N_1 \ldots N_t$ values of $\vec h \in [-\vec N,\vec N]$ there is some horizontal character $\overline{\eta}_{\vec h} : \overline{G^{\Box}} \rightarrow \R/\Z$ with $0 < |\overline{\eta}_{\vec h}| \ll \delta^{-O(1)}$ and
\[ \Vert \overline{\eta}_{\vec h} \circ \overline{g^{\Box}_{\vec h}} \Vert_{C^{\infty}[\vec N]} \ll \delta^{-O(1)}.\] 
By pigeonholing in $\vec h$ we may assume that $\overline{\eta} = \overline{\eta_{\vec h}}$ is independent of $\vec h$. Writing $\eta : G^{\Box} \rightarrow \R/\Z$ for the horizontal character defined by $\eta(x) = \overline{\eta}(\overline{x})$, we see that $0 < |\eta| \ll \delta^{-O(1)}$ and that 
\begin{equation}\label{h-biases-2} \Vert \eta \circ g_{\vec h}^{\Box} \Vert_{C^{\infty}[\vec N]} \ll \delta^{-O(1)}.\end{equation}

Next recall the following.

\begin{lem75rpt}
We have a decomposition $\eta(g', g) = \eta_1(g) + \eta_2(g' g^{-1})$ for all $(g',g) \in G^\Box$, where $\eta_1 : G \rightarrow \R/\Z$ is a horizontal character on $G$, and $\eta_2 : G_{2} \rightarrow \R/\Z$ is a horizontal character on $G_{2}$ which also annihilates $[G,G_{2}]$. Furthermore we have $| \eta_1 |, |\eta_2 | \ll \delta^{-O(1)}$.
\end{lem75rpt}

Recalling the formula \eqref{gh-def} for $g_{\vec h}^{\Box}(\vec n) = g_{\vec h}(\vec n)$, we therefore have
\begin{align*} \eta(g_{\vec h}^{\Box} & (\vec n)) = \eta_1(g(n)) + \\ & \eta_2(\{g_\lin(\vec h)\}^{-1} g_{\nonlin}(\vec n+\vec h) g_\lin(\vec n + \vec h) [g_\lin(\vec h)]^{-1}
g_\lin(\vec n)^{-1} g_{\nonlin}(\vec n)^{-1}).\end{align*}
If $\eta_2$ is trivial, we thus have the conclusion of  \cite[Theorem 8.6]{green-tao-nilratner}, so we may assume henceforth that $\eta_2$ is non-trivial.  As $\eta_2$ vanishes on $[G,G_{2}]$, the above expression is equal to
\begin{align*}
\eta_1(g(n)) & + \eta_2(g_{\nonlin}(n+h)) - \\ & \eta_2(g_{\nonlin}(n)) + \eta_2( \{g_\lin(\vec h)\}^{-1}
g_\lin(\vec n+\vec h) [g_\lin(\vec h)]^{-1}
g_\lin(\vec n)^{-1}).\end{align*}
We simplify this expression further as follows.  As $\eta_2$ vanishes on $[G,[G,G]] \subset [G,G_2]$, we have
\begin{equation}\label{Eta2}
 \eta_2( a bc d) = \eta_2( a cb d ) + \eta_2([b,c])
 \end{equation}
whenever $a,b,c,d \in G$ are such that $abcd \in G_2$ (or equivalently $acbd \in G_2$).  From this we may conclude that the map $(b,c) \mapsto \eta_2([b,c])$ is a bilinear and antisymmetric map from $G \times G$ to $\R/\Z$.  Writing $[g_\lin(\vec h)]^{-1} = g_\lin(\vec h)^{-1} \{g_\lin(\vec h)\}$, we thus have
\begin{align*}
\eta_2& ( \{g_\lin(\vec h)\}^{-1} g_\lin(\vec n + \vec h) [g_\lin(\vec h)]^{-1} g_\lin(\vec n)^{-1} ) \\ &= \eta_2( [\{g_\lin(\vec h)\}^{-1}, g_\lin(\vec n+\vec h) g_\lin(\vec h)^{-1} ] ) + \\ & \qquad\qquad \eta_2( g_\lin(\vec n +\vec h) g_\lin(\vec h)^{-1} g_\lin(\vec n)^{-1} ) \\
&= \sum_{i=1}^t n_i \eta_2( [g(e_i), \{ g_\lin(\vec h)\}] ) +  \eta_2( g_\lin(\vec n +\vec h) g_\lin(\vec h)^{-1} g_\lin(\vec n)^{-1} ).
\end{align*}
Using \eqref{Eta2} we may write $\eta_2( g_\lin(\vec n +\vec h) g_\lin(\vec h)^{-1} g_\lin(\vec n)^{-1} )$ as
\begin{align*}
\eta_2& ( g(e_1)^{n_1+h_1} \ldots g(e_t)^{n_t+h_t} g(e_t)^{-h_t} \ldots g(e_1)^{-h_1} g(e_t)^{-n_t} \ldots g(e_1)^{-n_1} ) \\ & = \eta_2( [g(e_t)^{n_t}, g(e_{t-1})^{h_{t-1}} \ldots g(e_1)^{-h_1}] ) + \\ & \qquad 
\eta_2( g(e_1)^{n_1+h_1} \ldots g(e_{t-1})^{n_{t-1}+h_{t-1}} g(e_{t-1})^{-h_{t-1}} \ldots \\ & \qquad \qquad g(e_1)^{-h_1} g(e_{t-1})^{-n_{t-1}} \ldots g(e_1)^{-n_1} );
\end{align*}
iterating this $t$ times and using the bilinearity of $(b,c)\mapsto \eta_2([b,c])$, this becomes
$$ \sum_{1 \leq j < i \leq t} h_j n_i \eta_2( [g(e_i), g(e_j)] ).$$
Putting all this together, we thus have
$$
\eta(g_{\vec h}^{\Box}(\vec n)) = P(\vec n) + Q(\vec n+\vec h) - Q(\vec n) + \sum_{i=1}^t \sigma_i(\vec h) n_i$$
where
\begin{align*}
P(\vec n) &:= \eta_1(g(\vec n)) \\
Q(\vec n) &:= \eta_2(g_\nonlin(\vec n)) \\
\sigma_i(\vec h) &:= \eta_2( [g(e_i), \{ g_\lin(\vec h)\}] ) + \sum_{1 \leq j < i} h_j \eta_2( [g(e_i), g(e_j) ] ).
\end{align*}
Note that $P,Q$ are polynomial maps from $\Z^t$ to $\R/\Z$ of degree at most $d$, with 
$$ P(0)=Q(0)=Q(e_i)=0$$
for $i=1,\ldots,t$.  From \eqref{h-biases-2} we thus have
\begin{equation}\label{h-biases-3} 
\Vert P(\vec n) + Q(\vec n+\vec h) - Q(\vec n) + \sum_{i=1}^t \sigma_i(\vec h) n_i \Vert_{C^{\infty}[\vec N]} \ll \delta^{-O(1)}\end{equation}
for $\gg \delta^{-O(1)} N_1 \ldots N_t$ values of $\vec h \in [-\vec N, \vec N]$.

We now perform a multidimensional version of the arguments used to establish \cite[Lemma 7.6]{green-tao-nilratner}.  Let $1 \leq i,j \leq t$.  If we apply the second-order difference operator $\partial_{e_i} \partial_{e_j}$ to the expression inside the norm in \eqref{h-biases-3}, and then evaluate at $\vec n = 0$, we see that
$$ 
\Vert \partial_{e_i} \partial_{e_j} (P-Q)(0) + \partial_{e_i} \partial_{e_j} Q(\vec h) \Vert_{\R/\Z} \ll \delta^{-O(1)} N_i^{-1} N_j^{-1} .$$
Applying Proposition \ref{prop2.1} we conclude that
$$
\Vert \partial_{e_i} \partial_{e_j} (P-Q)(0) + \partial_{e_i} \partial_{e_j} Q \Vert_{C^\infty[\vec N], \delta^{-O(1)}} \ll \delta^{-O(1)} N_i^{-1} N_j^{-1}.
$$
In particular, we have
$$ \| \partial_{e_{i_1}} \ldots \partial_{e_{i_j}} Q(0) \|_{\R/\Z, \delta^{-O(1)}} \ll \delta^{-O(1)} N_{i_1}^{-1} \ldots N_{i_j}^{-1}$$
whenever $j \geq 3$.  By Taylor expansion, we thus have
$$ Q(\vec n) = \sum_{1 \leq i < j \leq t} \alpha_{ij} n_i n_j +\sum_{i=1}^t \alpha_{ii} \binom{n_i}{2} + R(\vec n)$$
where $\alpha_{ij} := \partial_{e_i} \partial_{e_j} Q(0)$ and $R$ obeys the bounds
$$ R(0) = R(e_i) = R(e_i+e_j) = 0$$
for $1 \leq i,j \leq t$ and
$$ \|R\|_{C^\infty[\vec N], \delta^{-O(1)}} \ll \delta^{-O(1)}.$$
Substituting this back into \eqref{h-biases-3}, we conclude that
\begin{align*}
\Vert P(\vec n) + & \partial_{\vec h} R(\vec n) + \sum_{i=1}^t (\sum_{j=1}^t \alpha_{ij} h_j + \sigma_i(\vec h)) n_i
+ \\ & \qquad\qquad \sum_{1 \leq i < j \leq t} \alpha_{ij} h_i h_j +\sum_{i=1}^t \alpha_{ii} \binom{h_i}{2}
 \Vert_{C^{\infty}[\vec N]} \ll \delta^{-O(1)}
 \end{align*}
for $\gg \delta^{O(1)} N_1 \ldots N_t$ values of $\vec h \in [-\vec N, \vec N]$.  If we apply $\partial_{e_i}$ and evaluate at $0$ for one such $\vec h$, recalling that $P(0)=0$, we conclude that
$$
\Vert P(e_i) + \partial_{e_i} \partial_{\vec h} R(\vec n) + (\sum_{j=1}^t \alpha_{ij} h_j + \sigma_i(\vec h)) 
 \Vert_{\R/\Z} \ll \delta^{-O(1)} N_i^{-1}
$$
which by the properties of $R$ implies that
\begin{equation}\label{pei}
\Vert P(e_i) + (\sum_{j=1}^t \alpha_{ij} h_j + \sigma_i(\vec h)) 
 \Vert_{\R/\Z,\delta^{-O(1)}} \ll \delta^{-O(1)} N_i^{-1}.
\end{equation}
By the pigeonhole principle, we may thus find $1 \leq q \ll \delta^{-O(1)}$ such that, for $\gg \delta^{O(1)} N_1 \ldots N_t$ values of $\vec h \in [-\vec N, \vec N]$, one has
$$
\Vert q P(e_i) + q (\sum_{j=1}^t \alpha_{ij} h_j + \sigma_i(\vec h)) 
 \Vert_{\R/\Z} \ll \delta^{-O(1)} N_i^{-1}
$$
for all $1 \leq i \leq t$.  By multiplying $\eta_1,\eta_2$ by $q$ if necessary we may assume in fact that $q=1$, thus
$$
\Vert P(e_i) + \sum_{j=1}^t \alpha_{ij} h_j + \sigma_i(\vec h) 
 \Vert_{\R/\Z} \ll \delta^{-O(1)} N_i^{-1}.
$$
We expand this as
$$
\Vert \beta_i + \sum_{j=1}^t \alpha'_{ij} h_j + 
\eta_2( [g(e_i), \{ g_\lin(\vec h)\}] ) \Vert_{\R/\Z} \ll \delta^{-O(1)} N_i^{-1},$$
where
$$ \beta_i := \eta_1(g(e_i))$$
and
$$ \alpha'_{ij} := \partial_{e_i} \partial_{e_j} Q(0) + 1_{j < i} \eta_2( [g(e_i), g(e_j)] ).$$
As the map $(b,c) \mapsto \eta_2([b,c])$ is bilinear, the map $c \mapsto \eta_2([g(e_i),c])$ is a homomorphism.   Thus there exists $\zeta_i \in \R^m$ such that
$$ \eta_2([g(e_i),x]) = \zeta_i \cdot \psi(x) \md{\Z}
$$
for all $x \in G$.  As discussed after \cite[(7.15)]{green-tao-nilratner}, all but the first $m_\lin$ coefficients of $\zeta_i$ are non-zero, and $|\zeta_i| \ll \delta^{-O(1)}$.  We thus have
\begin{equation}\label{sink}
\Vert \beta_i + \sum_{j=1}^t \alpha'_{ij} h_j + 
\zeta_i \cdot \{ \sum_{j=i}^t \gamma_j h_j \} \Vert_{\R/\Z} \ll \delta^{-O(1)} N_i^{-1},
\end{equation}
for $\gg \delta^{O(1)} N_1 \ldots N_t$ values of $\vec h \in [-\vec N, \vec N]$, where $\gamma_j := \psi( g(e_j) )$.

To handle this conclusion, we require the following multiparameter version of \cite[Claim 7.7]{green-tao-nilratner}.

\begin{lemma}\label{claim77gen}
Let $\beta, \alpha_1,\dots, \alpha_t \in \R$ and suppose that $\zeta, \gamma_1,\dots,\gamma_t \in \R^m$. Let $N, N_1,\dots, N_t$ be parameters and set $[-\vec{N}, \vec{N}] := \prod_{j = 1}^t [-N_j, N_j]$. Suppose that $|\zeta| \leq 1/\delta$ and that
\[ \Vert \beta + \sum_{j=1}^t \alpha_j h_j + \zeta \cdot \{ \sum_{j=1}^t \gamma_j h_j \} \Vert_{\R/\Z} \leq 1/\delta N\] for $\geq \delta N_1 \dots N_t$ values of $\vec{h} \in [-\vec{N}, \vec{N}]$. Then at least one of the following two statements is true:
\begin{itemize}
\item[(i)] There is $r \in \Z$, $0 <  r \ll \delta^{-O(1)}$, such that $\Vert r \zeta_l \md{\Z} \Vert_{\R/\Z} \ll \delta^{-O(1)}/N$ for $l = 1,\dots, m$;
\item[(ii)] There exists $k \in \Z^{m}$, $0 < |k| \ll \delta^{-O(1)}$, such that $\Vert k \cdot \gamma_j \Vert_{\R/\Z} \ll \delta^{-O(1)}/N_j$ for all $j = 1,\dots,t$.
\end{itemize}
The implied constant $O(1)$ may depend on $m$ and $t$.
\end{lemma}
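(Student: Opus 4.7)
The plan is to argue by dichotomy, in the spirit of the single-parameter Claim 7.7 of \cite{green-tao-nilratner}: if conclusion (ii) fails, we derive (i). Assume (ii) fails, so that for every $k \in \Z^m$ with $0 < |k| \leq \delta^{-A}$ (with $A$ a large constant) there is some index $j$ with $\Vert k \cdot \gamma_j \Vert_{\R/\Z} \geq \delta^A / N_j$. The relevant exponential sum factorises as
\[ \sum_{\vec h \in [-\vec N, \vec N]} e\bigl(k \cdot \sum_j \gamma_j h_j\bigr) = \prod_j \sum_{h_j \in [-N_j, N_j]} e\bigl((k \cdot \gamma_j) h_j\bigr) \ll \prod_j \min\bigl(2N_j + 1, \Vert k \cdot \gamma_j \Vert_{\R/\Z}^{-1}\bigr), \]
so by the quantitative Weyl criterion, the orbit $(\sum_j \gamma_j h_j \md{\Z^m})_{\vec h}$ is $\delta^{O(1)}$-equidistributed in $\T^m$.

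The key step is a first-order difference calculation. Write $\Psi(\vec h) := \beta + \sum_j \alpha_j h_j + \zeta \cdot \{\sum_j \gamma_j h_j\}$ and let $H$ be the good set of $\vec h$'s. Since each $\gamma_j$ may be taken in $[0,1)^m$ (as available in the application leading to \eqref{sink}), a direct computation yields
\[ \Psi(\vec h + e_j) - \Psi(\vec h) = \alpha_j + \zeta \cdot \gamma_j - \zeta \cdot \vec n_{\vec h, j}, \]
where the wrap vector $\vec n_{\vec h, j} \in \{0,1\}^m$ has $l$-th component $\mathbf{1}_{\{(\sum_i \gamma_i h_i)_l\} + \gamma_{j,l} \geq 1}$. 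For pairs $(\vec h, \vec h + e_j) \in H \times H$, the left side has $\R/\Z$-norm at most $2/\delta N$. Equidistribution then implies, for each realisable pattern $\vec n \in \{0,1\}^m$ (those with positive mass $\mu_j(\vec n) := \prod_l \gamma_{j,l}^{n_l} (1-\gamma_{j,l})^{1-n_l}$), that there are $\gg \delta^{O(1)} \mu_j(\vec n) \prod_i N_i$ good pairs with wrap $\vec n$, and hence
\[ \Vert \alpha_j + \zeta \cdot \gamma_j - \zeta \cdot \vec n \Vert_{\R/\Z} \ll \delta^{-O(1)} / N. \]
Subtracting this relation for two realisable patterns $\vec n, \vec n'$ gives $\Vert \zeta \cdot (\vec n - \vec n') \Vert_{\R/\Z} \ll \delta^{-O(1)}/N$ with $\vec n - \vec n' \in \{-1, 0, 1\}^m$.

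In the generic case, when for every $l$ some $\gamma_{j, l}$ lies in $(\delta^{O(1)}, 1 - \delta^{O(1)})$, both $\vec n$ and $\vec n + e_l$ are realisable and we conclude $\Vert \zeta_l \Vert_{\R/\Z} \ll \delta^{-O(1)}/N$ for every $l$, giving (i) with $r = 1$. In the degenerate case, some coordinate $l$ has every $\gamma_{j, l}$ near $0$ or $1$; then we replace the step $e_j$ by $K_j e_j$ for an integer multiplier $K_j \ll \delta^{-O(1)}$ chosen so that $\{K_j \gamma_{j, l}\}$ is bounded away from $0$ and $1$. The existence of such a $K_j$ (for each degenerate $l$) is a consequence of the failure of (ii) applied to $k = e_l$. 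The same difference calculation then produces wrap vectors that flip in coordinate $l$, at the cost of an overall factor of $K_j$ on $\zeta_l$ in the resulting bound, which is absorbed into the denominator $r$.

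\textbf{Main obstacle.} The principal technical difficulty is the bookkeeping in the degenerate case: one must verify that across the various directions $j = 1, \dots, t$ and the auxiliary multipliers $K_j$, the collected relations $\Vert \zeta \cdot \vec k \Vert_{\R/\Z} \ll \delta^{-O(1)}/N$ (for $\vec k$ in a small-height subset of $\Z^m$) combine, via a Siegel-lemma / linear-algebra argument in the spirit of Lemma \ref{lemma3}, into a single integer $r \ll \delta^{-O(1)}$ with $\Vert r \zeta_l \Vert_{\R/\Z} \ll \delta^{-O(1)}/N$ uniformly in $l = 1, \dots, m$. Tracking the $\delta$-losses carefully through the Weyl, pigeonhole, and multiplier steps, so that the final exponents remain of the form $\delta^{-O(1)}$ (with implied constants depending only on $m, t$), is the delicate but essentially combinatorial heart of the proof.
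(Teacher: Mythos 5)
Your approach is genuinely different from the paper's, and it has a real gap. The paper proves this lemma by a two-step application of Proposition~\ref{lem53multi}, using the dimensional-reduction trick $m' := m+1$, $\zeta' := (\zeta,1)$, $\gamma'_j := (\gamma_j,\alpha_j)$, $\alpha'_j := 0$, which folds the uncontrolled linear drift $\sum_j \alpha_j h_j$ into the bracket term $\zeta'\cdot\{\sum_j \gamma'_j h_j\}$; the first application yields either (i), or (ii), or a nontrivial relation $\Vert k\cdot\gamma_j + r\alpha_j\Vert_{\R/\Z} \ll \delta^{-O(1)}/N_j$, and in the last case one multiplies the hypothesis by $r$ and applies Proposition~\ref{lem53multi} once more. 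Your plan instead tries to exploit failure of (ii) directly via Weyl equidistribution and a first-difference / wrap-vector analysis.

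The gap is in the sentence ``Equidistribution then implies, for each realisable pattern $\vec n\in\{0,1\}^m$ \dots, that there are $\gg \delta^{O(1)}\mu_j(\vec n)\prod_i N_i$ good pairs with wrap $\vec n$.'' Equidistribution of the orbit $(\{\sum_i\gamma_i h_i\})_{\vec h\in[-\vec N,\vec N]}$ over the \emph{whole} box gives no information about the restriction of that orbit to the good set $H$, which has density only $\geq\delta$ and whose location inside $[-\vec N,\vec N]$ is governed precisely by the uncontrolled reals $\beta,\alpha_1,\dots,\alpha_t$. In particular nothing in the hypotheses forces $H$ to contain even a single pair $(\vec h,\vec h+e_j)\in H\times H$ (already for $t=1$ a set of density $\delta<1/2$ can have empty intersection with its own shift by $1$), and a fortiori nothing forces it to contain $\gg \delta^{O(1)}\mu_j(\vec n)\prod N_i$ such pairs with a prescribed wrap. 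The hypothesis $|\zeta|\leq 1/\delta$ also goes unused in your sketch, which is a warning sign. This is exactly the difficulty that the grid decomposition inside Proposition~\ref{lem53multi} (following \cite[Proposition 5.3]{green-tao-nilratner}) is designed to address: one first applies Lemma~\ref{lem32multi} --- which needs only the density-$\delta$ hypothesis, with no pair structure --- to control the $\alpha_j$'s, then pigeonholes $[-\vec N,\vec N]$ into $q$-spaced grids on which $\sum_j\alpha_j h_j$ varies by $\ll \delta\sup_l|\zeta_l|$, and only then invokes Weyl equidistribution (Proposition~\ref{prop31multi}) on the rescaled orbit over a single dense grid. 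If you want to pursue the wrap-vector route you would need to import this kind of grid reduction to justify the pair count, at which point you are essentially re-deriving Proposition~\ref{lem53multi}; the degenerate-case bookkeeping with the auxiliary multipliers $K_j$ would then be an additional layer that the paper's $(\zeta,1)$ trick avoids entirely.
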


 The proof of this statement goes along rather similar lines to that of \cite[Claim 7.7]{green-tao-nilratner}. However, the aforementioned proof was itself outsourced to no fewer than three earlier results from that paper, namely \cite[Proposition 3.1, Lemma 3.2, Proposition 5.3]{green-tao-nilratner}, which depend upon one another in sequence and which must now be formulated in a multparameter setting. On account of this undesirable state of affairs we give more details of these deductions in Appendix \ref{multiparameter-bracket}. 

Applying Lemma \ref{claim77gen} to \eqref{sink} for $i = 1,\dots, t$ (with $m = m_{\lin}$, $\beta = \beta_i$, $\alpha_i = \alpha'_{ij}$, $\zeta = \zeta_i$ and $N = N_i$), we see that either
\begin{itemize}
\item[(i)] There is $1 \leq r \ll \delta^{-O(1)}$ such that 
\[ \Vert r \zeta_{il} \md{\Z} \Vert_{\R/\Z} \ll \delta^{-O(1)}/N_i\] for $i = 1,\dots,t$ and $l = 1,\dots, m_{\lin}$,
where $\zeta_{il}$ is the $e_l$-component of $\zeta_i$; or
\item[(ii)] There exists $k \in \Z^{m_{\lin}}$, $0 < |k| \ll \delta^{-O(1)}$ such that $\Vert k \cdot \gamma_j \Vert_{\R/\Z} \ll \delta^{-O(1)}/N_j$ for $j = 1,\dots,t$.
\end{itemize}
Note that \emph{a priori} the application of Lemma \ref{claim77gen} gives, in option (i), a value of $r$ that depends on $i$. However by defining $r = r_1 \dots r_t$ we can eliminate this dependence.

If claim (ii) holds, then the horizontal character \[ \eta: x \mapsto k \cdot \psi(x) \md{\Z}\] is non-trivial and obeys the conclusion of Theorem \ref{mainthm-ind} by arguing exactly as in \cite[\S 7]{green-tao-nilratner}, so suppose instead that claim (i) holds. For each $1 \leq j \leq m$, let $\tau_j: G \to \R/\Z$ be the map
$$ \tau_j(x) := r \eta_2( [x, \exp(X_j)] ).$$
As in \cite[\S 7]{green-tao-nilratner}, $\tau_j$ is a horizontal character annihilating $G_2$ with $|\tau_j| \ll \delta^{-O(1)}$, and
$$ \tau_j(g(\vec n)) = r \sum_{i=1}^t n_i \zeta_{ij} \md{\Z},$$
so that $\|\tau_j\|_{C^\infty[\vec N]} \ll \delta^{-O(1)}$.  Thus, if any of the $\tau_j$ is non-zero, we obtain the conclusion of Theorem \ref{mainthm-ind}.  The only remaining case is if $\tau_j=0$ for all $j=1,\ldots,m$.  Arguing as in \cite[\S 7]{green-tao-nilratner}, this implies that $\eta_2$ annihilates $[G,G]$.  Thus the $\zeta_i$ vanish and $\alpha'_{ij} = \alpha_{ij}$, so that \eqref{sink} simplifies to
$$\Vert \beta_i + \sum_{j=1}^t \alpha_{ij} h_j  \Vert_{\R/\Z} \ll \delta^{-O(1)} N_i^{-1}$$
for all $1 \leq i \leq m$ and $\gg \delta^{O(1)} N_1 \ldots N_t$ values of $\vec{h} \in [-\vec N,\vec N]$.  Applying \cite[Lemma 3.2]{green-tao-nilratner} in each variable $h_j$ separately, we conclude that either\footnote{This possibility was omitted in previous versions of this erratum.} $N_i \ll \delta^{-O(1)}$ for some $i$, or else that
$$ \| \alpha_{ij}\|_{\R/\Z,\delta^{-O(1)}} \ll \delta^{-O(1)} N_i^{-1} N_j^{-1}$$
for all $1 \leq i,j \leq m$.  Arguing as in \cite[\S 7]{green-tao-nilratner} this implies that
$$ \| q \eta_2 \circ g_\nonlin \|_{C^\infty[\vec N]} \ll \delta^{-O(1)}$$
for some $1 \leq q \ll \delta^{-O(1)}$; by dilating $\eta,\eta_1,\eta_2$ by $q$ we may take $q=1$, thus
$$ \| \eta_2 \circ g_\nonlin \|_{C^\infty[\vec N]} \ll \delta^{-O(1)}.$$
The remainder of the proof of Theorem \ref{mainthm-ind} then proceeds by a routine adaptation of the last half of \cite[\S 7]{green-tao-nilratner} to the multi-dimensional case.

\section{Minor errata}\label{minor}

We take the opportunity to correct some further small points in \cite{green-tao-nilratner}.

\begin{itemize}
\item In the proof of \cite[Lemma 3.2]{green-tao-nilratner}, $k$ and $q$ are the same.
\item The invocation of \cite[Lemma 3.2]{green-tao-nilratner} in the proof of \cite[Proposition 5.3]{green-tao-nilratner} is only valid in the regime $\sup_i |\zeta_i| \leq \frac{\delta}{2(1+m)}$, due to the hypothesis $\eps \leq \delta/2$ in \cite[Lemma 3.2]{green-tao-nilratner}.  However, the conclusion $|\alpha| \ll_m \sup_i |\zeta_i| \delta^{-C}/N$ is trivially valid in the remaining case $\sup_i |\zeta_i| > \frac{\delta}{2(1+m)}$, due to the hypothesis $|\alpha| \leq 1/\delta N$. See also the proof of Proposition \ref{lem53multi} (a multiparameter version of \cite[Proposition 5.3]{green-tao-nilratner}) below.
\item After (7.6), $g_h$ should be $\tilde g_h$.
\item The last lines of the proof of Proposition 7.2 are valid for the case $j \geq 1$. For the $j=0$ case, one needs to replace $G_{j+1}$ by $G_2$, that is to say one needs to verify $g_i^{\binom{n+h}{i}}=g_i^{\binom{n}{i}} \md{G_2}$.  For $i \geq 2$ this is clear; for $i=0,1$ one can verify that $g_i$ is trivial since $g_2(0)=g_2(1) = \id_G$.
\item The proof of Lemma 7.8 is not correct as it stands, because we failed to check that $[G'_1, G'_1] \subseteq G'_2$. For this to hold we need $\ker \tilde\eta_2 \subset [G,G]$, which is not in general true. However, earlier arguments in Section 7 complete the analysis unless we are in this case, by the remarks at the bottom of page 512. Note that there is a further small misprint on page 512, line -3: this should state that $\eta_2$ annihilates $[G,G]$.
\end{itemize}

\appendix

\section{A lemma about bracket forms}

\label{multiparameter-bracket}

The purpose of this section is to indicate the proof of Lemma \ref{claim77gen}. As noted in the main text, this is a multiparameter version of \cite[Claim 7.7]{green-tao-nilratner}, and is proven in an analogous manner. Throughout this appendix we will have parameters $N_1,\dots, N_t$ and will write $[\vec{N}] := \prod_{j=1}^t [N_j]$, $[-\vec{N}, \vec{N}] := \prod_{j = 1}^t [-N_j, N_j]$. We will also occasionally use a further parameter $N$.

We begin with a multiparameter version of \cite[Proposition 3.1]{green-tao-nilratner}, proven in a very similar manner to that result.
\begin{proposition}\label{prop31multi}
Let $m \geq 1$, let $0 < \delta < \frac{1}{2}$, and let $\gamma_1,\dots, \gamma_t \in \R^m$. If the sequence $(\gamma_1 n_1 + \dots + \gamma_t n_t)_{\vec{n} \in [\vec{N}]}$ is not $\delta$-equidistributed in the torus $(\R/\Z)^m$ then there exists some $k \in \Z^m$ with $0 < |k| \ll \delta^{-O(1)}$ such that $\Vert k \cdot \gamma_j \Vert_{\R/\Z} \ll \delta^{-O(1)}/N_j$ for all $j = 1,\dots, t$.
\end{proposition}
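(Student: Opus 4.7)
The plan is to reduce, via Fourier analysis on $(\R/\Z)^m$, to a bound on a single exponential sum, and then exploit the fact that this sum factorises completely over the coordinates $n_1,\dots,n_t$ because the polynomial $\gamma_1 n_1 + \dots + \gamma_t n_t$ is linear and carries no cross monomials. This makes the multiparameter case of \cite[Proposition 3.1]{green-tao-nilratner} strictly easier than the higher-degree polynomial situation that occupies the rest of this appendix.

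In more detail, by the definition of $\delta$-equidistribution there is a Lipschitz function $F:(\R/\Z)^m \to \C$ with $\Vert F \Vert_{\Lip} \leq 1$ and $\int F = 0$ such that
\[ |\E_{\vec n \in [\vec N]} F(\gamma \cdot \vec n)| \geq \delta, \]
where I abbreviate $\gamma \cdot \vec n := \gamma_1 n_1 + \dots + \gamma_t n_t$. Approximating $F$ by a Fejér-type trigonometric polynomial of degree $K \ll \delta^{-O(1)}$, exactly as in the one-parameter proof of \cite[Proposition 3.1]{green-tao-nilratner}, and then applying the triangle inequality together with pigeonholing over the $O(K^m) = \delta^{-O(1)}$ surviving Fourier modes, one extracts a single non-zero $k \in \Z^m$ with $|k| \ll \delta^{-O(1)}$ for which
\[ |\E_{\vec n \in [\vec N]} e(k \cdot (\gamma \cdot \vec n))| \gg \delta^{O(1)}. \]

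The key observation is that $k \cdot (\gamma \cdot \vec n) = \sum_{j=1}^t (k \cdot \gamma_j) n_j$ has no cross terms, so the exponential sum factorises:
\[ \E_{\vec n \in [\vec N]} e(k \cdot (\gamma \cdot \vec n)) = \prod_{j=1}^t \E_{n_j \in [N_j]} e\bigl(n_j (k \cdot \gamma_j)\bigr). \]
Each factor has modulus at most $1$, so each one must individually be $\gg \delta^{O(1)}$. The standard one-variable geometric-sum estimate $|\E_{n \in [N]} e(n\alpha)| \ll \min(1, 1/(N\Vert \alpha \Vert_{\R/\Z}))$ then yields $\Vert k \cdot \gamma_j \Vert_{\R/\Z} \ll \delta^{-O(1)}/N_j$ for every $j = 1,\dots,t$, with the same $k$ working uniformly in $j$ since it was chosen prior to factorising.

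Since the argument is essentially a mechanical adaptation of the one-parameter proof and the only genuinely new ingredient is the display above, I do not expect any substantive obstacle. If anything, the hard part is purely notational bookkeeping: one must simply track that the Fourier truncation, the pigeonholing, and the geometric sum bound each cost only polynomial factors in $\delta^{-1}$, so that the final $k$ and the final $\delta^{-O(1)}/N_j$ bound on $\Vert k \cdot \gamma_j \Vert_{\R/\Z}$ retain polynomial dependence on $\delta^{-1}$.
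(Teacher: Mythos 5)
Your proof is correct and follows essentially the same approach as the paper: a quantitative Weyl argument (Fourier truncation plus pigeonholing) to extract a single non-zero $k \in \Z^m$ with $|\E_{\vec n \in [\vec N]} e(k\cdot(\gamma_1 n_1 + \dots + \gamma_t n_t))| \gg \delta^{O(1)}$, followed by the observation that this exponential sum factorises over the $t$ coordinates so each factor must individually be $\gg \delta^{O(1)}$, and finally the standard one-variable geometric-sum estimate. This matches the paper's proof line by line.
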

\begin{proof}
We argue exactly as in the proof of \cite[Proposition 3.1]{green-tao-nilratner} (which is a standard quantitative Weyl equidistribution argument). With extremely minimal changes, we arrive at the conclusion that there is some $k \in \Z^m$, $0 < |k| \ll \delta^{-O(1)}$, such that 
\[ |\E_{\vec{n} \in [\vec{N}]} e(k \cdot (n_1 \gamma_1 + \dots + n_t \gamma_t))| \gg \delta^{O(1)}.\]
The average here factors into an average over each $n_j$ separately. Each of these averages is trivially bounded by $1$ and so we have
\[ |\E_{n_j \in [N_j]} e((k \cdot \gamma_j) n_j) | \gg \delta^{O(1)}\] for $j = 1,\dots, t$. Using the standard estimate
\[ |\E_{n \in [N]} e(nt)| \ll \min\bigg(1, \frac{1}{N \Vert t \Vert_{\R/\Z}}\bigg),\] the conclusion follows.
\end{proof}
Next we need a multiparameter version of \cite[Lemma 3.2]{green-tao-nilratner}. 
\begin{lemma}\label{lem32multi}
Suppose that $\vec{\alpha}\in \R^t$, $0 < \delta < \frac{1}{2}$ and $0 < \eps \leq \frac{1}{2}\delta$. Let $I \subset \R/\Z$ be an interval of length $\eps$ such that $\vec{\alpha} \cdot \vec{n} \in I$ for at least $\delta N_1 \dots N_t$ values of $\vec{n} \in [\vec{N}]$. Then there is $q \in \Z$, $0 < |q| \ll \delta^{-O(1)}$, such that $\Vert q \alpha_j \Vert_{\R/\Z} \ll \eps \delta^{-O(1)}/N_j$ for $j = 1,\dots, t$.
\end{lemma}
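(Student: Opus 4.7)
The plan is to reduce the multiparameter statement to the one-dimensional Lemma 3.2 of \cite{green-tao-nilratner} by slicing along each coordinate direction, and then combine the resulting denominators by multiplication. Fix an index $j \in \{1,\dots,t\}$ and write $\vec{n} = (\vec{n}', n_j)$ with $\vec{n}' := (n_i)_{i \neq j}$, so that $\vec{\alpha} \cdot \vec{n} = \alpha_j n_j + \beta(\vec{n}')$ where $\beta(\vec{n}') := \sum_{i \neq j} \alpha_i n_i$. Let $S := \{\vec{n} \in [\vec{N}] : \vec{\alpha} \cdot \vec{n} \in I\}$ and set $f(\vec{n}') := |\{n_j \in [N_j] : (\vec{n}', n_j) \in S\}|$. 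Since $\sum_{\vec{n}'} f(\vec{n}') = |S| \geq \delta N_1 \cdots N_t$ and trivially $f(\vec{n}') \leq N_j$, a first-moment pigeonhole produces some $\vec{n}'$ with $f(\vec{n}') \geq \delta N_j / 2$; that is, $\alpha_j n_j$ lies in the translated interval $I - \beta(\vec{n}') \subset \R/\Z$ of length $\eps$ for at least $\delta N_j /2$ values of $n_j \in [N_j]$.

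Next, we invoke the one-dimensional Lemma 3.2 of \cite{green-tao-nilratner} on this slice. To match the hypothesis $\eps \leq \delta'/2$ with density $\delta' = \delta/2$, we split the interval $I - \beta(\vec{n}')$ into two equal sub-intervals of length $\eps/2$ and pigeonhole so that $\alpha_j n_j$ lies in a single such sub-interval of length $\eps/2 \leq \delta/4$ for at least $\delta N_j /4$ values of $n_j$. The cited lemma then produces an integer $q_j \in \Z$ with $0 < |q_j| \ll \delta^{-O(1)}$ and
\[ \Vert q_j \alpha_j \Vert_{\R/\Z} \ll \eps \delta^{-O(1)} / N_j. \]

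Setting $q := q_1 q_2 \cdots q_t$, we obtain $0 < |q| \ll \delta^{-O(1)}$ since $t$ is fixed, and for each $j$
\[ \Vert q \alpha_j \Vert_{\R/\Z} \leq \Bigl(\prod_{i \neq j} |q_i|\Bigr) \Vert q_j \alpha_j \Vert_{\R/\Z} \ll \delta^{-O(1)} \cdot \eps \delta^{-O(1)} / N_j \ll \eps \delta^{-O(1)} / N_j, \]
which is the required bound. There is no substantive obstacle beyond the minor constant-factor bookkeeping in the slice-pigeonhole step; the argument is simply the natural reduction of the $t$-variable statement to $t$ separate applications of its one-variable counterpart, followed by combining the denominators multiplicatively, exactly in the spirit of the closing pigeonhole in the proof of Proposition \ref{prop2.2}.
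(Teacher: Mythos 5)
Your proof is correct and takes essentially the same approach as the paper: fix a slice in which the one-variable density is preserved, apply the one-dimensional \cite[Lemma 3.2]{green-tao-nilratner} to that slice, and multiply the resulting denominators $q_j$ together. One small simplification you missed: since the average of $f(\vec{n}')$ over $\vec{n}' \in \prod_{i \neq j}[N_i]$ is already $\geq \delta N_j$, the maximum is also $\geq \delta N_j$ (not merely $\delta N_j/2$), so the hypothesis $\eps \leq \delta/2$ of the one-dimensional lemma already matches and both the factor-of-two loss and the interval-splitting step are unnecessary.
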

\begin{proof}
This follows easily from \cite[Lemma 3.2]{green-tao-nilratner} applied in each variable separately. Indeed for each $j$ there is a choice of the $n_{j'}$, $j' \neq j$, such that $\alpha_j n_j \in \tilde{I}$ for at least $\delta N_j$ values of $n_j \in [N_j]$. Here, $\tilde I$ is simply $I$ translated by $\sum_{j' \neq j} \alpha_{j'} n_{j'}$. Applying \cite[Lemma 3.2]{green-tao-nilratner}, we conclude that there is $q_j \in \Z$, $0 < |q_j| \ll \delta^{-O(1)}$, such that $\Vert q_j \alpha_j \Vert_{\R/\Z} \ll \eps \delta^{-O(1)}/N_j$. Setting $q := q_1 \dots q_t$ gives the result. 
\end{proof}

Now we establish a multiparameter version of \cite[Proposition 5.3]{green-tao-nilratner}.

\begin{proposition}\label{lem53multi}
Let $\beta, \alpha_1,\dots, \alpha_t \in \R$ and suppose that $\zeta, \gamma_1,\dots,\gamma_t \in \R^m$. Suppose that 
\begin{equation}\label{58new} \Vert \beta + \sum_{j=1}^t \alpha_j h_j + \zeta \cdot \{ \sum_{j=1}^t \gamma_j h_j \} \Vert_{\R/\Z} \leq 1/\delta N\end{equation} for $\geq \delta N_1 \dots N_t$ values of $\vec{h} \in [-\vec{N}, \vec{N}]$. Suppose that $|\alpha_j| \leq 1/\delta N_j$ and additionally that $|\zeta| \leq 1/\delta$. Then either $\sup_l |\zeta_l| \ll \delta^{-O(1)}/N$ or else there exists $k \in \Z^{m}$, $0 < |k| \ll \delta^{-O(1)}$, such that $\Vert k \cdot \gamma_j \Vert_{\R/\Z} \ll \delta^{-O(1)}/N_j$ for all $j = 1,\dots, t$.

\end{proposition}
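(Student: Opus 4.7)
My plan is to adapt the proof of \cite[Proposition 5.3]{green-tao-nilratner} to the multiparameter setting, substituting the multiparameter results Proposition \ref{prop31multi} and Lemma \ref{lem32multi} for the single-variable statements \cite[Proposition 3.1]{green-tao-nilratner} and \cite[Lemma 3.2]{green-tao-nilratner} wherever they are invoked. At the level of technique, this amounts to carrying out the Green--Tao Fourier/bracket analysis in $t$ variables rather than one.

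Writing $v(\vec h) := \sum_{j=1}^t \gamma_j h_j \in \R^m$ and $\phi(\vec h) := \beta + \sum_j \alpha_j h_j + \zeta \cdot \{v(\vec h)\}$, I would first convert the hypothesis into a nontrivial exponential-sum lower bound. Applying a smooth majorant of $1_{[-1/\delta N, 1/\delta N]}$ on $\R/\Z$ with Fourier support in $|n| \leq N_0 := C\delta^{-O(1)}$, subtracting the zeroth Fourier coefficient (which is admissible under a harmless preliminary bound $N \gg \delta^{-O(1)}$), and dyadic pigeonholing over the frequency, one produces an integer $n$ with $1 \leq |n| \ll \delta^{-O(1)}$ such that
\[ \Big| \sum_{\vec h \in [-\vec N, \vec N]} e(n\phi(\vec h)) \Big| \gg \delta^{O(1)} N_1 \cdots N_t.\]

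The bracket would then be handled as follows. Fourier expand $e(n\zeta \cdot x) = \sum_{\vec k \in \Z^m} a_{n,\vec k} \, e(\vec k \cdot x)$ on $[0,1)^m$ coordinatewise, where $a_{n,\vec k} = \prod_l \tfrac{e(n\zeta_l)-1}{2\pi i(n\zeta_l - k_l)}$ decays like $\prod_l (1+|n\zeta_l - k_l|)^{-1}$. Using the identity $e(\vec k \cdot \{v\}) = e(\vec k \cdot v)$ for $\vec k \in \Z^m$, the exponential sum factorises over the $h_j$, and the standard bound $|\sum_{h_j = -N_j}^{N_j} e(h_j t)| \ll \min(N_j, \|t\|_{\R/\Z}^{-1})$ together with the decay of $a_{n,\vec k}$ forces, after a further pigeonhole on $\vec k$ within a box of radius $\delta^{-O(1)}$ about $n\zeta$, the existence of some $\vec k \in \Z^m$ with $|\vec k| \ll \delta^{-O(1)}$ and
\[ \|n \alpha_j + \vec k \cdot \gamma_j\|_{\R/\Z} \ll \delta^{-O(1)}/N_j \quad (j=1,\dots,t).\]
The hypothesis $|\alpha_j| \leq 1/\delta N_j$ absorbs the $\alpha_j$ term, leaving $\|\vec k \cdot \gamma_j\|_{\R/\Z} \ll \delta^{-O(1)}/N_j$ for all $j$.

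The dichotomy $\vec k \neq \vec 0$ versus $\vec k = \vec 0$ gives the two alternatives. In the first case we are immediately in conclusion (ii). In the second case the size $|a_{n,\vec 0}| \gg \delta^{O(1)}$ forces $|n\zeta_l| \ll \delta^{-O(1)}$ and hence the preliminary bound $|\zeta_l| \ll \delta^{-O(1)}$. I expect the main obstacle to be upgrading this crude bound to the stated $\sup_l |\zeta_l| \ll \delta^{-O(1)}/N$, which is where the precise width $1/\delta N$ of the tolerance in the hypothesis (rather than a width-$O(1)$ tolerance) must be genuinely exploited. As in the one-parameter argument, this should be accomplished by a bootstrap: once $|\zeta|$ is known to be $O(\delta^{-O(1)})$, the bracket $\zeta \cdot \{v(\vec h)\}$ differs from the linear form $\zeta \cdot v(\vec h)$ by an element of a bounded finite set (after pigeonholing on the integer part of the relevant inner products), and the hypothesis reduces to a genuine linear equidistribution statement in $\vec h$; applying Lemma \ref{lem32multi} in each coordinate then yields the desired $1/N$ refinement of the bound on $\zeta$.
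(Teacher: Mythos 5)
Your proposal takes a genuinely different route from the paper, so let me first describe what the paper does and then explain where your approach breaks down.

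The paper's proof does not Fourier-expand the bracket at all. Instead it exploits the crude size bound $|\zeta \cdot \{v(\vec h)\}| \leq m \sup_l |\zeta_l|$ directly: the hypothesis then forces $\Vert \beta + \sum_j \alpha_j h_j \Vert_{\R/\Z} \leq (m+1)\sup_l|\zeta_l|$ for a $\delta$-fraction of $\vec h$, and Lemma~\ref{lem32multi} produces a $q \ll \delta^{-O(1)}$ with $\Vert q\alpha_j\Vert_{\R/\Z} \ll \sup_l|\zeta_l| \delta^{-C}/N_j$. The crucial next step — absent from your proposal — is to split $[\vec N]$ into sub-grids of common difference $q$ and side length $\approx c\delta^{C+1}N_j$, so that on one such grid the linear part $\sum_j \alpha_j h_j$ fluctuates by at most $\tfrac{\delta}{20}\sup_l|\zeta_l|$. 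This freezes the linear term, and then the hypothesis says that $\zeta \cdot \{v(\vec h)\}$ lies in an interval of length $\ll \frac{1}{\delta N} + \delta \sup_l |\zeta_l|$ for a $\delta$-fraction of $\vec h$ on the grid. At that point the dichotomy is clean: either $\sup_l|\zeta_l| \leq 20/\delta^2 N$ (conclusion (i)), or the concentration of $\zeta \cdot \{v(\vec h)\}$ on an interval short relative to its natural scale $\sup_l|\zeta_l|$ forces $(v(\vec n))_{\vec n \in I}$ to fail to equidistribute on $(\R/\Z)^m$, whereupon Proposition~\ref{prop31multi} (applied on the sub-grid, with $\gamma_j$ replaced by $q\gamma_j$) gives conclusion (ii). The paper also flags explicitly that, unlike the one-parameter \cite[Proposition 5.3]{green-tao-nilratner}, one cannot simply rescale to assume $q=1$; the grid argument is the replacement for that step.

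Your Fourier-analytic route has two problems. The lesser one: the coefficients $a_{n,\vec k}$ of $e(n\zeta\cdot x)$ on $[0,1)^m$ decay only like $\prod_l(1+|n\zeta_l - k_l|)^{-1}$, so the expansion is not absolutely convergent and the ``pigeonhole on $\vec k$ within a box about $n\zeta$'' step needs a smoothing or sub-box restriction which you do not supply; this is repairable with effort. The more serious one is the bootstrap for conclusion (i). You propose pigeonholing on ``the integer part of the relevant inner products,'' but $\lfloor v(\vec h)\rfloor$ ranges over a set of size roughly $\prod_l \prod_j (|\gamma_{j,l}|N_j)$, which is unbounded in terms of $\delta$; pigeonholing on it destroys all the density. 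Even granting such a pigeonhole, the linearised hypothesis $\Vert \beta' + \sum_j (\alpha_j + \zeta\cdot\gamma_j) h_j \Vert_{\R/\Z} \leq 1/\delta N$ fed into Lemma~\ref{lem32multi} gives control on the combinations $\alpha_j + \zeta\cdot\gamma_j$, not on the individual coordinates $\zeta_l$, so it cannot yield $\sup_l|\zeta_l| \ll \delta^{-O(1)}/N$. In short, your exponential-sum analysis lands you at the crude bound $|\zeta_l| \ll \delta^{-O(1)}$, and the mechanism that in the paper upgrades this to $\ll \delta^{-O(1)}/N$ — namely the sub-grid refinement and the resulting non-equidistribution of $v$ — is missing. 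You should restructure the argument so that the $1/N$ gain comes from the concentration of the bounded quantity $\zeta\cdot\{v(\vec h)\}$ on a short interval, rather than from Fourier coefficients of the bracket.
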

\begin{proof}
If $\sup_l |\zeta_l| \leq 1/\delta N$ then the conclusion is immediate, so assume this is not the case. Then the assumption implies that 
\[ \Vert \beta + \sum_{j=1}^t \alpha_j h_j \Vert_{\R/\Z} \leq m\sum_l |\zeta_l| + \frac{1}{\delta N} \leq (m+1) \sup_l |\zeta_l|\] for $\geq \delta N_1 \dots N_t$ values of $\vec{h}$. 

If $\sup_l |\zeta_l| \leq \frac{\delta}{2(m+1)}$ then Lemma \ref{lem32multi} applies, and we conclude that there is $q$, $0 < |q| \ll \delta^{-O(1)}$, such that \begin{equation}\label{eq59}\Vert q \alpha_j \Vert_{\R/\Z} \ll \sup_l |\zeta_l| \delta^{-C}/N_j\end{equation} for $j = 1,\dots, t$. If $\sup_l |\zeta_l| > \frac{\delta}{2(m+1)}$ then a similar conclusion trivially holds due to the hypothesis $|\alpha_j| \leq 1/\delta N_j$.

In \cite[Proposition 5.3]{green-tao-nilratner} we argued that we could now set $q$ to $1$, but this is not possible here (basically because the statement of Proposition \ref{lem53multi} does not follow trivially in the event that \emph{some} $N_j$ is $\ll \delta^{-O(1)}$). Thankfully, this step was not strictly necessary in the proof of \cite[Proposition 5.3]{green-tao-nilratner} either. 

Split $[\vec{N}]$ into $O(\delta^{-O(1)})$ grids of the form $P_1 \times \dots \times P_t$, where each $P_j$ is an arithmetic progression with common difference $q$ and length between $N'_j$ and $2N'_j$, where $N'_j := c \delta^{C+1} N_j$ and $c > 0$ is to be specified later. By the pigeonhole principle, we can find one of these grids $I$ in which there are $\geq \delta |I|$ values of $\vec{h}$ such that \eqref{58new} holds. If $c$ is chosen sufficiently small, then by \eqref{eq59} we see that $\sum_{j=1}^t \alpha_j h_j$ does not vary by more than $\frac{\delta}{20} \sup_l |\zeta_l|$ as $\vec{h}$ varies over such a grid.

Now we follow the proof of \cite[Proposition 5.3]{green-tao-nilratner}, concluding that either $\sup_l |\zeta_l| \leq 20/\delta^2 N$ (in which case the proposition holds) or else $(\gamma_1 n_1 + \dots + \gamma_t n_t)_{\vec{n} \in I}$ is not $c\delta^2$-equidistributed on the torus $(\R/\Z)^m$. Writing $I = P_1 \times \dots \times P_t$ with $P_j = x_j + q[N''_j]$, where $N'_j \leq N''_j \leq 2 N'_j$, we see that $(q (\gamma_1 n'_1 + \dots + \gamma_t n'_t))_{\vec{n}' \in [\vec{N}'']}$ is not $c\delta^2$-equidistributed on the torus. Applying Proposition \ref{prop31multi} (with $\gamma_j$ replaced by $q \gamma_j$) concludes the proof.
\end{proof}

Finally, we are ready to deduce Lemma \ref{claim77gen}, the deduction being along very similar lines to that of \cite[Claim 7.7]{green-tao-nilratner}. 
\begin{proof}[Proof of Lemma \ref{claim77gen}.] First apply Proposition \ref{lem53multi} with $m' := m+1$, $\beta' = \beta$, $\zeta' := (\zeta, 1)$, $\gamma'_j := (\gamma_j, \alpha_j)$ and $\alpha'_j := 0$, noting that 
\[ \beta' + \sum_{j = 1}^t \alpha'_j h_j + \zeta' \cdot \{ \sum_{j=1}^t \gamma'_j h_j\} = \beta + \sum_{j=1}^t \alpha_j h_j + \zeta \cdot \{ \sum_{j=1}^t \gamma_j h_j \} \md{1}.\]
We deduce that either $|\zeta'_l| \ll \delta^{-O(1)}/N$ for all $l$, in which case (i) holds and we are done, or else there exists $k \in \Z^m$ and $r \in \Z$, not both zero and with $|k|, |r| \ll \delta^{-O(1)}$, such that $\Vert k \cdot \gamma_j + r \alpha_j \Vert_{\R/\Z} \ll \delta^{-O(1)}/N_j$ for $j = 1,\dots, t$. If $r = 0$ then option (ii) of Lemma \ref{claim77gen} holds and we are done, so assume that $r \neq 0$. Multiplying the assumption of the lemma through by $r$ we see that for $\geq \delta N_1 \dots N_t$ values of $\vec{h} \in [-\vec{N},\vec{N}]$ we have
\[ \Vert \tilde \beta + \sum_{j = 1}^t \tilde\alpha_j h_j + \tilde \zeta \cdot \{\sum_{j=1}^t \gamma_j h_j \} \Vert_{\R/\Z} \ll \delta^{-O(1)}/N,\] where $\tilde \beta := r \beta$, $\tilde \alpha_j := \{ k \cdot \gamma_j + r \alpha_j\}$ and $\tilde \zeta := r \zeta - k$. Note that $|\tilde \alpha_j| \ll \delta^{-O(1)}/N_j$. Thus we may apply Proposition \ref{lem53multi} once more to conclude that either $|\tilde \zeta_l| \ll \delta^{-O(1)}/N$ for $l = 1,\dots, m$, which implies (i), or else there is a nonzero $\tilde k \in \Z^m$ such that $\Vert \tilde k \cdot \gamma_j \Vert_{\R/\Z} \ll \delta^{-O(1)}/N_j$ for $j = 1,\dots, t$, which implies (ii).
\end{proof}

\end{document}